\newtheorem{theorem}{Theorem}
\newtheorem{lemma}{Lemma}
\newtheorem{corollary}{Corollary}
\newtheorem{remark}{Remark}
\newtheorem{problem}{Problem}
\newtheorem{assumption}{Assumption}
\newtheorem{proof}{Proof}
\newtheorem{proof of Theorem 1}{Proof of Theorem 1}
\newcommand*{\QEDA}{\hfill\ensuremath{\blacksquare}}
\DeclareMathOperator{\diag}{diag}
\begin{document}
%

\title{Online distributed algorithms for seeking generalized Nash equilibria in dynamic environments}
%
%
%

\author{Kaihong~Lu,~
       Guangqi~Li~
       and~Long~Wang~
\thanks{This work was supported by National Natural Science Foundation of China (61751301 and 61533001). (\emph{Corresponding author: Long Wang})}
\thanks{K. Lu and G. Li are with the Center for Complex Systems, School of Mechano-electronic Engineering, Xidian University, Xi'an 710071, China
 (e-mail:khong\_lu@163.com; gqli@xidian.edu.cn)
 }
\thanks{L. Wang is with the Center for Systems and Control, College of Engineering, Peking
University, Beijing 100871, China
 (e-mail: longwang@pku.edu.cn)}
}

\maketitle

\begin{abstract}
In this paper, we study the distributed generalized Nash equilibrium seeking problem of non-cooperative games in dynamic environments. Each player in the game aims to minimize its own time-varying cost function subject to a local action set. The action sets of all players are coupled through a shared convex inequality constraint. Each player can only have access to its own cost
function, its own set constraint and a local block of the
inequality constraint, and can only communicate with its neighbours via a connected graph. Moreover, players do not have prior
knowledge of their future cost functions. To address this problem, an online distributed algorithm is proposed based on consensus algorithms and a primal-dual strategy. Performance of the algorithm is measured by using dynamic regrets. Under mild assumptions on graphs and cost functions, we prove that if the deviation of variational generalized Nash equilibrium sequence increases within a certain rate, then the regrets, as well as the violation of inequality constraint, grow sublinearly. A simulation is presented to demonstrate the effectiveness
of our theoretical results.

\end{abstract}

\begin{keywords}
Nash equilibrium; online algorithm, distributed algorithm; consensus; non-cooperative game.
\end{keywords}

%
\IEEEpeerreviewmaketitle
\section{Introduction}
\IEEEPARstart{N}{ash} equilibrium (NE) seeking and generalized Nash equilibrium (GNE) seeking in non-cooperative games have received increasing attention in recent years. This is due to their wide practical applications in large-scale systems including sensor networks \cite{M. S. Stankovi2}, distributed power systems \cite{Paccagnan1}, and social networks \cite{Ghaderi2}.

Recently, distributed NEs or GNEs seeking in non-cooperative games without full action information have been considered \cite{F. Salehisadaghiani4}-\cite{{G. Dian4}}, \cite{S. Liang}-\cite{K. Lu 6}. In such problems, players cannot directly observe actions of players who are not their neighbors, but they can make decisions based on local estimations on actions of the others. In \cite{F. Salehisadaghiani4} and \cite{F. Salehisad2}, asynchronous
gossip-based algorithms are presented for seeking an NE. In \cite{M. Ye3} and \cite{M. Ye2}, continuous-time distributed NE seeking strategies are proposed by combining consensus algorithms and the gradient strategy. In \cite{{G. Dian4}}, an augmented gradient play dynamics is proposed by exploiting some incremental passivity properties of the pseudo-gradient mapping. In applications where players compete
for shared network resources, shared constraints that couple players' actions together usually appear \cite{ Paccagnan1, P. Yi 1}. In \cite{Grammatico11, Grammatico12}, semi-decentralized algorithms are proposed for seeking GNEs of aggregative games with shared coupling constraints, where a central
coordinator is required to broadcast the common multipliers and
aggregative variables. In \cite{S. Liang}, a distributed prime-dual strategy using partial decision
information is designed for searching a GNE in aggregate games with a shared linear equation constraint. In \cite{Pavel L11, P. Yi115}, for general non-cooperative games with shared linear constraints, a distributed algorithm via an operator-splitting approach is proposed for seeking the GNE under partial decision
information, and the asynchronous operator-splitting algorithm is studied in \cite{ P. Yi5}. Moreover, for non-cooperative games without full decision information,
a continuous-time distributed gradient projected algorithm is presented for seeking the GNE in \cite{K. Lu 6}.

If cost functions in a game are time-varying and they are only available to players after decisions are made, then the game is called an online game. Accordingly, an algorithm for dealing with such a game is referred to as an online algorithm or a learning algorithm. It is obvious that all of the aforementioned works consider offline games, and strategies in them are offline
algorithms. However, dynamic environments arise in many practical applications. For example, in the problem of allocating radio resources, due to the uncertainties in dynamic wireless environment, cost functions in the game is time-varying, sometimes changes can only be seen in hindsight \cite{M. Maskery1}. To adapt to such dynamic environments, it is necessary to develop online strategies. It is well-known that any online algorithm should mimic the performance of its offline counterpart, and the gap between them is the regret. For instance, in online optimization problems \cite{E. C. Hall}-\cite{A. Jadbabaie8}, the most stringent offline benchmark is to minimize the cost function at each time. The corresponding regret is called as dynamic regret. Using dynamic regrets, online optimization problems become insolvable in the worst case when cost functions fluctuate drastically. The difficulty can be characterized via a complexity measure that captures the variation in the optimal solution sequence \cite{E. C. Hall}. Different from optimization problems, since each player aims to minimize its own cost function, and each player's cost function depends on actions of the others, then NEs or GNEs are sub-optimal solutions in non-cooperative games.

Motivated by the observations above, in this paper, the online distributed GNE seeking problem in non-cooperative games without full action information is studied. Different from works \cite{F. Salehisadaghiani4}-\cite{ P. Yi5}, players' cost functions are dynamic, and changes can only be seen by players after decisions are made. The offline benchmark for each player is to selfishly minimize its own cost function at each time. Moreover, the shared constraint is modeled as a set of nonlinear convex inequalities, which is more general than those in \cite{S. Liang}-\cite{ P. Yi5}. To address this problem, a novel online distributed algorithm is presented based on consensus algorithms and a primal-dual algorithm. Due to
the existence of nonlinear constraints, the Lagrange multiplier and the argument are coupled in the primal-dual algorithm. This brings challenges to achieving a sublinear regret bound. To overcome this difficulty, a slowly decaying learning rate is employed for ensuring the product of itself and local Lagrange multipliers to be bounded. By implementing the proposed algorithm, each player makes decisions only using its own cost function,
a local set constraint, a local block of the convex inequality constraint, a local estimation on actions of the others, and actions received from its neighbors. We prove that if the graph is connected, then both the regrets and the violation of inequality constraint are bounded by product of a term depending on the deviation of variational GNE sequence and a sublinear function of learning time.

This paper is organized as follows. In Section \ref{se1}, we formulate the problem and present the online distributed GNE seeking algorithm. In Section \ref{se2}, we state our main result and give its proof. In Section \ref{se3}, a simulation example is presented. Section \ref{se4} concludes the whole paper.

{\emph{\textbf{Notations}}}. We use $\mathbb{{N}}^+$ to denote the set of positive integers. For any $T\in\mathbb{{N}}^+$, we denote set $\lfloor T\rfloor=\{1, \cdots, T\}$. $\mathbb{R}^{m}$ and $\mathbb{R}_+^r$ denote $m$-dimensional real vector space and $r$-dimensional non-negative real vector space, respectively. We use $\mathcal{O}(h)$ to denote a general function that is linear with respect to $h$. For a given vector ${x}\in\mathbb{R}^{m}$, $ \Vert{x} \Vert$ denotes the standard Euclidean norm of $x$, i.e., $ \Vert{x} \Vert=\sqrt{{x}^T{x}}$. $\textbf{1}_m$ denotes the $m$-dimensional vector with elements being all ones. For $a\in\mathbb{R}$, we denote $a_+=\max(a, 0)$. For vector $x\in\mathbb{R}^{m}$, we denote $x_+=[x^1_+, \cdots, x^m_+]^T$. Given a differentiable function $f(\cdot): \mathbb{R}^{m}\rightarrow\mathbb{R}$, we use $\nabla_x f(\cdot)$ to denote its gradient.
 For matrices $A$ and $B$, the Kronecker product is denoted by $A\otimes B$. $\lambda_{i}({A})$ represents the $i^{th}$ eigenvalue of square matrix ${A}$. The projection onto a set $\mathcal{K} $ is denoted by $P_\mathcal{K}(\cdot)$.

\section{problem formulation}\label{se1}

\subsection{Online GNE seeking problem}
  A game with $n$ players is denoted by $\Gamma(\mathcal{V}, \chi, J)$. $\mathcal{V}=\{1, \cdots, n\}$ represents the set of players; $\chi=\chi_1\times\cdots\times\chi_n$ denotes the action set of players, where $\chi_i\subset\mathbb{R}^{m}$ is the action set of player $i$; $J=(J_1, \cdots, J_n)$, where $J_i$ is the cost function of player $i$; Let $x=(x_i, x_{-i})$ denote all players' actions, where $x_i$ is the action of player $i$ and $x_{-i}$ denotes actions of players other
than player $i$, i.e., $x_{-i} =[x_1^T, \cdots, x_{i-1}^T, x_{i+1}^T,\cdots ,x_n^T]^T $. For game $\Gamma(\mathcal{V}, \chi, J)$, an action profile ${x^*} = (x_i^{*}, x_{ - i}^{*})$ is called the NE of this game if and only if $J_i(x_i^*, x_{ - i}^*)\leq J_i(x_i, x_{- i}^*)$ holds for any $x_i\in\chi_i$ and $i\in\mathcal{V}$. Moreover, if $\chi_i$ is determined by actions of the other players, then the NE is referred to be a GNE.

Here we consider an online game $\Gamma(\mathcal{V}, \chi, J^t)$, where $J^t=(J_1^t, \cdots, J_n^t)$. We denote the constraint by $\chi:=\chi^s\cap (\Omega_1\times\cdots\times\Omega_n)$, where $\chi^s$ is the shared constraint $\chi^s=\{x\in\mathbb{R}^{nm}|\sum_{i=1}^ng_i(x_i)\leq0\}$, and $g_i(\cdot):\mathbb{R}^m\rightarrow\mathbb{R}^r$ can be nonlinear for any $i\in\mathcal{V}$ and is defined as $g_i=[g_{i1}, \cdots, g_{ir}]^T$, $\Omega_i\subset\mathbb{R}^{m}$ represents player $i$'s private constraint. Then, player $i$'s action set is denoted by $\chi_i(x_{-i})=\{x_i|(x_i, x_{-i})\in\chi\}$. For player $i\in\mathcal{V}$, a set of cost functions are given by $\left\{J_i^1, \cdots, J_i^T\right\}$, where $J_i^t: \mathbb{R}^m\rightarrow\mathbb{R}$, $T\in\mathbb{{N}}^+$ represents the learning time and is unknown to players. At each iteration time $t\in\lfloor T\rfloor$, player $i$ decides an action ${x}_i(t)\in\Omega_i$ under an online algorithm. After the action ${x}_i(t)$ is decided, a local cost function $J_i^t$ is received by player $i$, that is, information associated with cost functions is not available before decisions are made by palyers. In this scenario, for any $t\in\lfloor T\rfloor$, each player intends to solve the optimization problem
 \begin{equation}\label{TMIU}\begin{split}
&\min_{x_i}~~~~~~~J_i^t(x_i, x_{-i})\\
&\textrm{subject}~\textrm{to}~~~{{x_i} \in \chi_i(x_{-i})}.\\
\end{split}
\end{equation}

 Define a pseudo-gradient mapping ${F}^t(x)=[({\nabla _{{x_{1}}}}{J_1}^t(x))^T, $ $\cdots, ({\nabla _{{x_{n}}}}{J_n}^t(x))^T]^T$, some basic assumptions on the cost functions, which are also made in \cite{F. Salehisadaghiani4, K. Lu 6, P. Yi5}, are given as follows.
 \begin{assumption}\label{as1}
For $i\in\mathcal{V}$, $\Omega_i\in\mathbb{R}^{m}$ is a non-empty, compact and convex set; $J_i^t(x_i, x_{-i})$ is differentiable and convex with respect to $x_i$ for any $x_{-i}\in\mathbb{R}^{(n-1)m}$; $g_{ij}(y)$, $j=1, \cdots, r$ are convex and differentiable for any $y\in\mathbb{R}^{m}$ ; The action set $\chi$ is non-empty.
\end{assumption}
\begin{assumption}\label{as2}
(i) (Strong monotonicity) $(F^t(x)-F^t(y))^T(x$ $-y)\geq\mu\|x-y\|^2$, $\forall~x,~y\in\Omega$ for some $\mu>0$; \\
(ii) (Lipschitz continuity) $ \Vert\nabla_{x_{i}}J_i^t(x_{i}, w)-\nabla_{x_{i}}J_i^t(x_{i}, z) \Vert\leq \ell \Vert w-z \Vert$ for some $\ell>0$, $\forall~x_i\in \Omega_{i};~ w, z\in\mathbb{R}^{m}$, $i\in \mathcal{V}$.
 \end{assumption}

Under Assumption \ref{as1}, we know that $\Omega_i$ is convex and compact. Then, for any $i\in{\mathcal{V}}$ and $x_i\in\Omega_i$, $\|x_i\|$, $\|g_i(x_i)\|$, $\|\nabla_{x_i}J_i^t(x)\|$ and $\|\nabla_{x_i}g_i(x_i)\|$ are bounded, and we denote
\begin{equation}\label{eq4jjk000}\left\{ {\begin{array}{*{20}{c}}
  \begin{split}
  & \kappa_1=\sup_{x\in\Omega_i}\|x_i\|,~\kappa_2=\sup_{x\in\Omega_i}\|g_i(x_i)\|,\\
  &\kappa_3=\sup_{x\in\Omega}\|\nabla_{x_i}J_i^t(x)\|,~\kappa_0=\sup_{x_i\in\Omega_i}\|\nabla_{x_i}g_i(x_i)\|
\end{split}\end{array}} \right. \forall i\in\mathcal{V}.\end{equation}
Note that set $\chi$ is convex and compact. Together with the convexity and differentiability of cost functions, by Theorem
3.9 in \cite{F. Facchinei2}, we know that for any $t\in\lfloor T\rfloor$, every solution $x^*(t)\in\chi$ to the following variational inequality is a GNE of game $\Gamma(\mathcal{V}, \chi, J^t)$:
\begin{equation}\label{eq16***}\begin{split}
~~\left(F^t(x^*(t))\right)^T(x-x^*(t))\geq0 ~\emph{\emph{for}}~ \emph{\emph{all}}~ x\in\chi.
\end{split}\end{equation}
The solution $x^*(t)\in\chi$ in (\ref{eq16***}) is called a variational GNE. Moreover, by the strong monotonicity condition in Assumption \ref{as2}, it follows from Theorem 2. 3. 3 in \cite{F. Facchinei55} that variational inequality (\ref{eq16***}) has a unique solution. Accordingly, Assumptions \ref{as1} and \ref{as2} ensure existence and uniqueness of the variational GNE. Currently, it is rather difficult to seek all GNEs even if the game is offline. Since the variational GNE has the economic
interpretation of no price discrimination and enjoys good stability, seeking the variational GNE is the main goal in the study of games with shared constraints \cite{S. Liang}-\cite{ P. Yi5}.
 Given some $\gamma(t)>0$, based on variational inequality (\ref{eq16***}), define a Lagrange function $L^t(x, y)=\gamma(t)\Big(\sum_{i=1}^n J_i^t(x_i, x_{-i}^*(t))+\gamma(t)y^T\Big(\sum_{j=1, j\neq i}^ng_j(x_j^*(t))$ $+g_i(x_i)\Big)\Big)$, where $x\in\Omega$, $y\in\mathbb{R}_+^m$, and ${x^*(t)} = (x_i^{*}(t), x_{ - i}^{*}(t))$ is the variational GNE of game $\Gamma(\mathcal{V}, \chi, J^t)$. Using primal-dual theory \cite{P. Yi115, K. Lu 6, F. Facchinei2} and by the fact that $ {x_i^*(t)} =P_{\Omega_i}[x_i^*(t)]$, we know that for any $\gamma(t)\geq0$, there exists a bounded Lagrange multiplier $y^*(t)\in \mathbb{R}_+^m$ such that the following Karush-Kuhn-Tucker condition (KKT condition)
\begin{equation}\label{eq4jjk}\left\{ {\begin{array}{*{20}{c}}
  \begin{split}
  & {x_i^*(t)} = {P_{{\Omega_i}}}[{x_i^*(t)} - \gamma(t)(\nabla_{x_i}J_i^t(x_i^{*}(t), x_{ - i}^{*}(t)) + \gamma(t)\nabla_{x_i}g_i(x_i^*(t))y^*(t))] \\
   &{{y^*(t)} = \left[{y^*(t)} + \sum_{i=1}^ng_i(x_i^*(t))\right]_+} \\
\end{split}\end{array}} \right. \forall i\in\mathcal{V}\end{equation}
 where $\nabla_{x_i}g_i(\cdot)=[\nabla_{x_i}g_{i1}(\cdot), \cdots, $ $\nabla_{x_i}g_{ir}(\cdot)]$.

 Any learning or online algorithm should mimic the performance of its offline counterpart, and the gap between them is {regret}. Here the offline benchmark for each player is to minimize its own cost function at each time. By the definition of GNEs, we know that $(x_i^*(t), x_{-i}^*(t))$ is a GNE of (\ref{TMIU}) if and only if $x_i^*(t)$ is the solution to the following optimization
  \begin{equation}\label{TMIU12}\begin{split}
&\min_{x_i}~~~~~~~J_i^t(x_i, x_{-i}^*(t))\\
&\textrm{subject}~\textrm{to}~~~{{x_i} \in \chi_i(x_{-i}^*(t)})\\
\end{split}
\end{equation}
for any $i\in\mathcal{V}$. Based on (\ref{TMIU12}) and motivated by \cite{E. C. Hall}, the regret is defined as follows
\begin{equation}\label{A}\begin{split}
\mathcal{R}_i(T)=\sum_{t=1}^T\Big(J_i^t(x_i(t), x_{-i}^*(t))-J_i^t(x^*(t))\Big),~i\in\mathcal{V}
\end{split}\end{equation}
where ${x^*}(t) = (x_i^{*}(t), x_{ - i}^{*}(t))$ is the variational GNE satisfying (\ref{eq4jjk}) at iteration time $t$. Accordingly, the violation of the inequality constraint is defined as
\begin{equation}\label{EMM}\begin{split}
\mathcal{R}_g(T)=\left\|\left[\sum_{t=1}^T\sum_{i=1}^ng_i(x_i(t))\right]_+\right\|.
\end{split}\end{equation}

An online algorithm performs well if both (\ref{A}) and (\ref{EMM}) sublinearly increase, i.e., $\lim_{T\rightarrow\infty}\mathcal{R}_i(T)/T$ $=0$ and $\lim_{T\rightarrow\infty}\mathcal{R}_g(T)/T$ $=0$. However, if the variational GNE sequence $\{{x}^*(t)\}_{t=0}^T$ fluctuate drastically, it could be impossible to keep dynamic regrets sublinear. Motivated by \cite{E. C. Hall}-\cite{A. Jadbabaie8}, we use the following deviation of the GNE sequence $\{{x}^*(t)\}_{t=0}^T$ to describe the difficulty:
\begin{equation}\label{QQqq}
\Theta_T=\sum_{t=0}^T\|{x}^*(t+1)-{x}^*(t)\|.
\end{equation}
\begin{problem} In game $\Gamma(\mathcal{V}, \chi, J^t)$, $J_i$ may be depended on actions of players who are not player $i$'s neighbors. Suppose that player $i$ can only communicate with its neighbors via
communication graph  $\mathcal{G}(A)$, and has access to the information associated with
$J_i$, $\Omega_i$, $g_i$ for any $i\in\mathcal{V}$. The goal of this paper is to design an online distributed algorithm
 for the players to seek variational GNEs of the game $\Gamma(\mathcal{V}, \chi, J^t)$, and performance of the algorithm is measured by using regret (\ref{A}) and violation (\ref{EMM}).
\end{problem}
\begin{remark}
If each cost function $J^t_i$ is fixed to be $J_i$, and $g_i(x_i)=[(A_ix_i-b_i)^T, -(A_ix_i-b_i)^T]^T$ for some matrix $A_i$ and vector $b_i$ with suitable dimension, then Problem 1 is reduced to be an offline GNE seeking problem studied in \cite{S. Liang,  P. Yi5}. Different from them, we investigate the case where cost functions are dynamic and the constraints are nonlinear.
\end{remark}

\subsection{An online distributed algorithm for GNEs}
Before presenting our algorithm, we denote the communication graph by an undirected graph $\mathcal{G}({A})$, where  $A=(a_{ij})_{n\times n}$ represents the weighted matrix. In $\mathcal{G}({A})$, the set of player $i$'s neighbors is denoted by $\mathcal{N}_i$. If $j\in\mathcal{N}_i$, then $a_{ij}=a_{ji}>0$; Otherwise, $a_{ij}=a_{ji}=0$. Moreover, the following connectivity assumption associated with graph $\mathcal{G}({A})$ is presented.
\begin{assumption}\label{as3}
$\mathcal{G}({A})$ is connected. Moreover, $0<a_{ii}<1$ for any $i\in \mathcal{V}$ and $A\textbf{1}_n=\textbf{1}_n$.
 \end{assumption}

We use $A_i^-$ to denote a submatrix that is formed by removing the $i_{th}$ row and the $i_{th}$ column of weighted matrix ${A}$. Define matrix $\Lambda_i=\diag(a_{1i}, \cdots, a_{(i-1)i}, a_{(i+1)i}, \cdots, a_{(n)i})$, it is obvious that $A_i^-=(A_i^-)^T$ and $((I_{n-1}-A_i^-)-\Lambda_i)\textbf{1}_{n-1}=0$.
Under Assumption \ref{as3}, we know that there exists a path from node $i$ to any other one. Based on Lemma 3 in \cite{Hong4}, we know that $I_{n-1}-A_i^-$ is positive definite, which implies that $\lambda_k(A_i^-)<1$. Using Gerschgorin's disk theorem and the fact that $0<a_{ii}<1$, it is not difficult to verify that $\lambda_k(A_i^-)>-1$ Thus, $-1<\lambda_k(A_i^-)<1$ for any $k=1, \cdots, n-1$. Throughout this paper, we denote
\begin{equation}\label{oo}\begin{split}
\lambda=\max_{1\leq i\leq n, 1\leq k\leq n-1}\left|\lambda_k(A_i^-)\right|.
\end{split}\end{equation}
It is obvious that $0<\lambda<1$. Furthermore, for symmetric and stochastic matrix ${A}$, its singular values can be sorted in a non-increasing fashion
\begin{equation}\label{uuu}\begin{split}
1=\sigma_1(A)\geq\sigma_2(A)\geq\cdots\geq\sigma_n(A)\geq0.
\end{split}\end{equation}
Under Assumption \ref{as3}, one knows that $0<\sigma_2(A)<1$ \cite{R. A. Horn}.

 Let vector $\textbf{x}_{-i}=[{x}_{i1}^T, \cdots, $  ${x}_{i(i-1)}^T, {x}_{i(i+1)}^T,$  $ \cdots, {x}_{in}^T]^T$ denote player $i$'s estimates on all the players' actions but its own's, where ${x}_{ij}$ is the player $i$'s estimate on player $j$'s action. For ease, we denote $\textbf{x}_{i}=[{x}_{i1}^T, \cdots, x_{in}^T]^T$, where ${x}_{ii}={x}_i$ denote player $i$'s real action. To solve Problem 1, we propose the following algorithm for player $i$, $i\in \mathcal{V}$:
 \begin{equation}\label{eq4}
 \left\{
\begin{split}
&{{{x}}}_{ih}(t+1)=\sum_{k \in {\mathcal{N}_i/\{h\}}} a_{ik}{x}_{kh}(t)+a_{ih}x_h(t),~h\neq i\\
&{x}_i(t+1)=(1-\gamma(t))x_i(t)+\gamma(t)P_{\Omega_i}\Big[ {x}_i(t)-\gamma(t)(\nabla_{x_i}J_i^t(\textbf{x}_i(t))+\gamma(t)\nabla_{x_i}g_i(x_i(t))y_i(t))\Big]\\
& {y_i(t+1)} =\Big[(1-\gamma^2(t))\sum_{j\in {\mathcal{N}_i}}a_{ij}{y}_j(t)+\gamma(t)g_i(x_i(t))\Big]_+\\
\end{split}
\right.
\end{equation}
 where $y_i\in\mathbb{R}^r$ is player $i$'s estimate on the Lagrange multiplier, $\gamma(t)$ is a non-increasing learning rate such that $0\leq\gamma(t)\leq1$, and the initial states are chosen as ${x}_i(1)\in{\Omega}_i$,
$y_i(1)=0$, $x_{ih}(1)=0$ for any $i\neq h$. In algorithm (\ref{eq4}), each player updates estimates on actions of others by a leader-following consensus algorithm \cite{Hong4}; Player $i$ updates $x_i(t)$ and $y_i(t)$ by using a distributed primal-dual strategy, which is motivated by the consensus algorithm \cite{Guan Yongqiang1, A. Li8} and the primal-dual strategy \cite{Nesterov}. Note that players make decisions only using local state information and their own cost functions in the past time, thus, algorithm (\ref{eq4}) is online and distributed.

\section{Main results}\label{se2}
In this section, we state our main result and give its proof in
detail.
\begin{theorem}  Under Assumptions 1-\ref{as3}, by algorithm (\ref{eq4}), regrets (\ref{A}) and violation (\ref{EMM}) are bounded by
\begin{equation}\label{eq58}\begin{split}
&\mathcal{R}_i(T)\leq \mathcal{O}\left(\sqrt{T\left(\frac{\Theta_T+1}{\gamma^2(T)}+\sum_{t=1}^{T}\gamma(t)\right)}\right),~i\in \mathcal{V}
\end{split}\end{equation}
and
\begin{equation}\label{eq70}\begin{split}
\mathcal{R}_g(T)\leq  \mathcal{O}\left(\frac{\sqrt{\left(\frac{\Theta_T+1}{\gamma^2(T)}+\sum_{t=1}^{T}\gamma(t)\right)\left(1+\sum_{t=1}^{T}\gamma^2(t)\right)}}{\gamma(T)}\right)
\end{split}\end{equation}
where $\Theta_T$ is defined in (\ref{QQqq}).
\end{theorem}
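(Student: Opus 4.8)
The plan is to reduce each regret $\mathcal{R}_i(T)$ to the accumulated tracking error $\sum_{t=1}^T\|x_i(t)-x_i^*(t)\|$ and to control that error through a primal--dual argument built on the strong monotonicity of $F^t$. Everything rests on two preliminary estimates. First, from the dual update in (\ref{eq4}), $\sum_{j} a_{ij}=1$ and $\|g_i(x_i)\|\le\kappa_2$, the quantity $M(t)=\max_i\|y_i(t)\|$ obeys the scalar recursion $M(t+1)\le(1-\gamma^2(t))M(t)+\gamma(t)\kappa_2$, whose solution satisfies $\|y_i(t)\|=\mathcal{O}(1/\gamma(t))$, so that $\gamma(t)\|y_i(t)\|=\mathcal{O}(1)$. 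This is precisely the effect of the damping factor $(1-\gamma^2(t))$ together with the slowly decaying rate, and it is what keeps the nonlinear coupling term $\gamma(t)\nabla_{x_i}g_i(x_i(t))y_i(t)$ bounded. Second, since $\Omega_i$ is compact the primal increment satisfies $\|x_i(t+1)-x_i(t)\|=\mathcal{O}(\gamma(t))$; feeding this into the leader-following consensus recursion for the estimation error $x_{ih}(t)-x_h(t)$ and using the contraction rate $\lambda<1$ from (\ref{oo}) yields $\|\mathbf{x}_{-i}(t)-x_{-i}(t)\|=\mathcal{O}(\gamma(t))$.

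Stacking the actions as $X(t)=[x_1(t)^T,\dots,x_n(t)^T]^T$ and writing $X^*(t)$ for the variational GNE, I would next derive a per-step descent inequality. Because the primal update is a convex combination with weight $\gamma(t)$ whose inner projection itself takes a $\gamma(t)$-step, expanding $\|x_i(t+1)-x_i^*(t)\|^2$ and using nonexpansiveness of $P_{\Omega_i}$ produces a contraction coefficient of order $\gamma^2(t)$ on the gradient inner product together with an $\mathcal{O}(\gamma^3(t)\|\phi_i(t)\|^2)$ residual, where $\phi_i(t)=\nabla_{x_i}J_i^t(\mathbf{x}_i(t))+\gamma(t)\nabla_{x_i}g_i(x_i(t))y_i(t)$ and $\|\phi_i(t)\|=\mathcal{O}(1)$ by the first estimate. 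Replacing $\nabla_{x_i}J_i^t(\mathbf{x}_i(t))$ by the true pseudo-gradient $F^t$ (the replacement error being $\mathcal{O}(\ell\gamma(t))$ via Assumption \ref{as2}(ii) and the consensus bound), the stacked inner product splits into a monotonicity part, bounded below by $\mu\|X(t)-X^*(t)\|^2$ through Assumption \ref{as2}(i), and a constraint part. The constraint part is paired with the analogous dual inequality obtained from the nonexpansiveness of $[\,\cdot\,]_+$, so that the $g_i$ cross-terms telescope against the KKT relations (\ref{eq4jjk}) for $(x^*(t),y^*(t))$; this step legitimately replaces the variational inequality (\ref{eq16***}), which cannot be invoked directly because $X(t)$ need not lie in $\chi$.

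Dividing the combined inequality by $2\mu\gamma^2(t)$ and summing over $t$ is where the two terms of the bound appear separately. The residual $\mathcal{O}(\gamma^3(t))/\gamma^2(t)=\mathcal{O}(\gamma(t))$ sums directly into the undivided $\sum_{t=1}^T\gamma(t)$ term, while the telescoping block carries the weight $1/\gamma^2(t)$; summation by parts with the non-decreasing weights $1/\gamma^2(t)$, the compactness bound $\|X(t+1)-X^*(t)\|=\mathcal{O}(1)$, and the drift $\|X^*(t+1)-X^*(t)\|$ accumulate into $\mathcal{O}((\Theta_T+1)/\gamma^2(T))$, the constraint cross-terms being absorbed by the dual recursion and the magnitude estimate at the same order. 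This yields the tracking bound $\sum_{t=1}^T\|X(t)-X^*(t)\|^2\le\mathcal{O}\big((\Theta_T+1)/\gamma^2(T)+\sum_{t=1}^T\gamma(t)\big)$. Convexity of $J_i^t$ in its first argument then gives $J_i^t(x_i(t),x_{-i}^*(t))-J_i^t(x^*(t))\le\kappa_3\|x_i(t)-x_i^*(t)\|$, so the Cauchy--Schwarz step $\sum_t\|x_i(t)-x_i^*(t)\|\le\sqrt{T}\,\big(\sum_t\|x_i(t)-x_i^*(t)\|^2\big)^{1/2}$ delivers (\ref{eq58}). For (\ref{eq70}), using $[\,\cdot\,]_+\ge(\cdot)$ in the dual update and summing over $i$ and $t$ (with $\sum_i a_{ij}=1$) extracts $\big[\sum_{t}\sum_i g_i(x_i(t))\big]_+$ in terms of $\|y_i(T+1)\|$ and $\sum_t\gamma^2(t)\|y_i(t)\|$; bounding these via the magnitude estimate and an accumulated dual term of order $1+\sum_t\gamma^2(t)$, and pairing with the tracking bound by Cauchy--Schwarz, produces the violation with its outer factor $1/\gamma(T)$.

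The step I expect to be the main obstacle is the dual magnitude estimate $\|y_i(t)\|=\mathcal{O}(1/\gamma(t))$ and its clean propagation through the primal--dual coupling $\gamma(t)\nabla_{x_i}g_i(x_i(t))y_i(t)$: every subsequent error estimate, and each power of $\gamma(T)$ appearing in (\ref{eq58}) and (\ref{eq70}), is inherited from how tightly this product is controlled by the damping $(1-\gamma^2(t))$. In particular, keeping the two contributions $(\Theta_T+1)/\gamma^2(T)$ and $\sum_{t}\gamma(t)$ from collapsing into a single looser power of $\gamma(T)$ forces the division of the descent inequality to be performed step-wise, before rather than after the summation over $t$.
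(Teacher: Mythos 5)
Your treatment of the regret bound follows the paper's route in all essentials: the damped dual recursion giving $\|\gamma(t)y_i(t)\|=\mathcal{O}(1)$ (your max-norm recursion $M(t+1)\le(1-\gamma^2(t))M(t)+\gamma(t)\kappa_2$ is a clean variant of the paper's Lemma 1), the consensus error fed through the Lipschitz condition, the strong-monotonicity descent inequality divided step-wise by $\gamma^2(t)$ before summation, and Cauchy--Schwarz at the end. One small imprecision there: for a general non-increasing $\gamma$ the consensus error is $\mathcal{O}\bigl(\sum_{k=0}^{t-2}\lambda^{k}\gamma(t-k-1)\bigr)$, which is only $\mathcal{O}(\gamma(t))$ pointwise for specific rates; what you actually need (and what holds, by exchanging the order of summation) is $\sum_{t}\|e_i(t)\|\le\mathcal{O}\bigl(\sum_t\gamma(t)\bigr)$.

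The genuine gap is in the violation bound. The mechanism you describe --- apply $[\,\cdot\,]_+\ge(\cdot)$ to the dual update, sum over $i$ using column stochasticity to get $S(t+1)\ge(1-\gamma^2(t))S(t)+\gamma(t)\sum_i g_i(x_i(t))$ with $S(t)=\sum_i y_i(t)$, and then bound $\bigl[\sum_t\gamma(t)\sum_i g_i(x_i(t))\bigr]_+$ by $\|S(T+1)\|+\sum_t\gamma^2(t)\|S(t)\|$ --- cannot produce (\ref{eq70}). With $\|S(t)\|=\mathcal{O}(1/\gamma(t))$ the second term is $\mathcal{O}\bigl(\sum_t\gamma(t)\bigr)$, and dividing by $\gamma(T)$ to pass from the $\gamma$-weighted sum to $\mathcal{R}_g(T)$ leaves $\mathcal{O}\bigl(\sum_t\gamma(t)/\gamma(T)\bigr)$, which for $\gamma(t)=t^{-\eta}$ equals $\mathcal{O}(T)$ --- a linear, vacuous bound, with no way to recover the product structure $\sqrt{(\cdot)\,(1+\sum_t\gamma^2(t))}$. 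The paper instead never discharges the multiplier early: it carries an arbitrary $y\in\mathbb{R}^r_+$ through the expansion of $\|y_i(t+1)-y\|^2$ (its Lemma 4), so that after combining with the primal inequality one obtains, for every $y\ge0$, a bound of the form $\frac{1}{\mu}\bigl\langle\sum_t\gamma(t)\sum_i g_i(x_i(t)),y\bigr\rangle-\frac{n}{2\mu}\bigl(1+\sum_t\gamma^2(t)\bigr)\|y\|^2\le\mathcal{O}\bigl(\frac{\Theta_T+1}{\gamma^2(T)}+\sum_t\gamma(t)\bigr)$; choosing $y$ proportional to $\bigl[\sum_t\gamma(t)\sum_i g_i(x_i(t))\bigr]_+$ (with normalization $n(1+\sum_t\gamma^2(t))$) turns the left side into $\bigl\|[\sum_t\gamma(t)\sum_i g_i(x_i(t))]_+\bigr\|^2/(2n\mu(1+\sum_t\gamma^2(t)))$, and only then is the single division by $\gamma(T)$ performed. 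That saddle-point step, not the positive-part recursion, is what yields the stated violation bound; it is also the step that makes your setting $y=0$ legitimate for the regret, since the sign-indefinite term $\sum_t\gamma(t)(g_i(x_i(t)))^{T}y_i(t)$ must be cancelled between the primal and dual inequalities rather than ``absorbed by the magnitude estimate.''
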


Results in Theorem 1 indicate that the sublinearity of bounds in
 (\ref{eq58}) and (\ref{eq70}) is determined by $\Theta_T$ and $\gamma(t)$. If we set the learning rate to be fixed by selecting $\gamma(t)=C$ for some $C>0$, then both of bounds in (\ref{eq58}) and (\ref{eq70}) are equivalent to $\mathcal{O}\left(\sqrt{T(T+\Theta_T)}\right)$, which do not increase sublinearly. To keep sublinearity of bounds in (\ref{eq58}) and (\ref{eq70}),  diminishing learning rate is necessary. Let diminishing learning rate be $\gamma(t)=C^{\eta}({Dt+C})^{-\eta}$ for some $C,~D>0$ and $0<\eta<\frac{1}{2}$, we have

\[\begin{split}
 \sum_{t=1}^T\gamma(t)&=\sum_{t=1}^{T}C^{\eta}({Dt+C})^{-\eta}\\
 &\leq1+\int_{0}^{T}C^{\eta}({Dt+C})^{-\eta}dt\\
& \leq \mathcal{O}\left(T^{1-\eta}\right)
 \end{split}\]
and
$$\sum_{t=1}^T\gamma^2(t)=\sum_{t=1}^{T}C^{-2\eta}({Dt+C})^{-2\eta}\leq \mathcal{O}\left(T^{1-2\eta}\right).$$
Then, based on Theorem 1, we have the following corollary.

\begin{corollary}  Under Assumptions 1-\ref{as3}, if the learning rate is given as $\gamma(t)=C^{\eta}({Dt+C})^{-\eta}$ for some $C,~D>0$ and $0<\eta<\frac{1}{2}$, then by algorithm (\ref{eq4}), regrets (\ref{A}) and violation (\ref{EMM}) are bounded by
\begin{equation}\label{eq58c}\begin{split}
&\mathcal{R}_i(T)\leq \mathcal{O}\left(T^{\frac{1}{2}+\eta}\left(\sqrt{1+\Theta_T}+T^{\frac{1-3\eta}{2}}\right)\right),~i\in \mathcal{V}
\end{split}\end{equation}
and
\begin{equation}\label{eq70c}\begin{split}
\mathcal{R}_g(T)\leq  \mathcal{O}\left(T^{\frac{1}{2}+\eta}\left(\sqrt{1+\Theta_T}+T^{\frac{1-3\eta}{2}}\right)\right)
\end{split}\end{equation}
where $\Theta_T$ is defined in (\ref{QQqq}).
\end{corollary}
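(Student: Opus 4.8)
The plan is to obtain the corollary by direct substitution of the prescribed learning rate $\gamma(t)=C^{\eta}(Dt+C)^{-\eta}$ into the two bounds (\ref{eq58}) and (\ref{eq70}) already supplied by Theorem 1, and then merely tracking powers of $T$. Two of the needed estimates, namely $\sum_{t=1}^{T}\gamma(t)\leq\mathcal{O}(T^{1-\eta})$ and $\sum_{t=1}^{T}\gamma^{2}(t)\leq\mathcal{O}(T^{1-2\eta})$, are already established in the discussion preceding the corollary (the second one crucially using $\eta<\tfrac12$, so that $1-2\eta>0$ and the integral $\int_{0}^{T}(Dt+C)^{-2\eta}\,dt$ is of order $T^{1-2\eta}$). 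First I would record the remaining two elementary facts obtained by evaluating $\gamma$ at $t=T$: since $\gamma(T)=C^{\eta}(DT+C)^{-\eta}$, one has $1/\gamma(T)=\mathcal{O}(T^{\eta})$ and $1/\gamma^{2}(T)=\mathcal{O}(T^{2\eta})$.

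For the regret, I would substitute these into (\ref{eq58}). The bracketed factor becomes
\[
\frac{\Theta_T+1}{\gamma^{2}(T)}+\sum_{t=1}^{T}\gamma(t)=\mathcal{O}\big((\Theta_T+1)T^{2\eta}\big)+\mathcal{O}\big(T^{1-\eta}\big),
\]
and multiplying by $T$ gives $\mathcal{O}((\Theta_T+1)T^{1+2\eta})+\mathcal{O}(T^{2-\eta})$. Applying $\sqrt{a+b}\leq\sqrt{a}+\sqrt{b}$ splits the square root into $\sqrt{\Theta_T+1}\,T^{1/2+\eta}+T^{1-\eta/2}$. Factoring out $T^{1/2+\eta}$ and noting that $1-\eta/2-(1/2+\eta)=(1-3\eta)/2$ reproduces exactly (\ref{eq58c}).

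For the violation bound I would proceed analogously on (\ref{eq70}). Writing $P$ for the first factor, already bounded above by $\mathcal{O}((\Theta_T+1)T^{2\eta}+T^{1-\eta})$, and $Q=1+\sum_{t=1}^{T}\gamma^{2}(t)$, I would use that $\eta<\tfrac12$ forces $1\leq T^{1-2\eta}$, so $Q=\mathcal{O}(T^{1-2\eta})$. Then $PQ=\mathcal{O}((\Theta_T+1)T+T^{2-3\eta})$, whence $\sqrt{PQ}=\mathcal{O}(\sqrt{\Theta_T+1}\,T^{1/2}+T^{1-3\eta/2})$ after one more application of subadditivity of the square root. Dividing by $\gamma(T)$, i.e. multiplying by $\mathcal{O}(T^{\eta})$, yields $\mathcal{O}(\sqrt{\Theta_T+1}\,T^{1/2+\eta}+T^{1-\eta/2})$, and factoring out $T^{1/2+\eta}$ reproduces (\ref{eq70c}); reassuringly the two bounds coincide in form.

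There is no genuine analytic obstacle here, since Theorem 1 already carries all the structural content and the corollary is a specialization. The only points demanding care are purely bookkeeping: correctly identifying the dominant power of $T$ in each sum, in particular the reliance on $\eta<\tfrac12$ to guarantee $1-2\eta>0$ (without which $\sum_{t=1}^{T}\gamma^{2}(t)$, and hence $Q$, would be of a different order), and the repeated, legitimate use of $\sqrt{a+b}\leq\sqrt{a}+\sqrt{b}$ to absorb the sums inside a square root into additive $\mathcal{O}$ terms. A secondary check worth performing is that the constants hidden in the $\mathcal{O}$ notation depend only on $C$, $D$, $\eta$ and the problem data, and not on $T$, so that the asymptotic statements in $T$ are meaningful for fixed parameters.
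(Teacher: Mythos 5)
Your proposal is correct and follows essentially the same route as the paper: substituting $\gamma(t)=C^{\eta}(Dt+C)^{-\eta}$ into the Theorem 1 bounds, using the estimates $\sum_{t=1}^{T}\gamma(t)\leq\mathcal{O}(T^{1-\eta})$ and $\sum_{t=1}^{T}\gamma^{2}(t)\leq\mathcal{O}(T^{1-2\eta})$ already displayed before the corollary, together with $1/\gamma(T)=\mathcal{O}(T^{\eta})$, and tracking powers of $T$ via $\sqrt{a+b}\leq\sqrt{a}+\sqrt{b}$. The paper leaves exactly this bookkeeping implicit, and your exponent arithmetic (e.g., $1-\eta/2-(1/2+\eta)=(1-3\eta)/2$) checks out in both bounds.
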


In Corollary 1, it is obvious that terms $T^{\frac{1}{2}+\eta}$ and $T^{\frac{1}{2}+\eta}\cdot T^{\frac{1-3\eta}{2}}=T^{1-\frac{\eta}{2}}$ are sublinear with $T$ due to the fact that $0<\eta<\frac{1}{2}$, then $\Theta_T$ is a significant factor that influences the sublinearity of the bounds. Note that if the increasing rate of $\Theta_T$ is sublinear with ${T}^{1-2\eta}$, i.e., $\lim_{T\rightarrow\infty}\frac{\Theta_T}{{T^{1-2\eta}}}=0$, then $\lim_{T\rightarrow\infty}\frac{T^{\frac{1}{2}+\eta}\sqrt{1+\Theta_T}}{T}=0$, which implies that both $\mathcal{R}_i(T)$ and $\mathcal{R}_g(T)$ sublinearly increase with $T$. In this case, online distributed algorithm (\ref{eq4}) performs well. Particularly, let $\eta=\frac{1}{4}$, if the increasing rate of $\Theta_T$ is sublinear with ${\sqrt{T}}$, then both $\mathcal{R}_i(T)$ and $\mathcal{R}_g(T)$ sublinearly grow with $T$. If
variational GNE sequence $\{{x}^*(t)\}_{t=1}^T$ fluctuates drastically, $\Theta_T$ could become linear with ${T}^\eta$, then the bounds in Theorem 1 cannot keep $\mathcal{R}_i(T)$ and $\mathcal{R}_g(T)$ sublinear. This is natural since even in the optimization problems having globally optimal solutions \cite{E. C. Hall}-\cite{A. Jadbabaie8}, the problem is insolvable in worst cases. Furthermore, if cost functions are time-invariant, and let $J_i^t=J_i$ and $x^*(t)=(x_i^*, x_{-i}^*)$ for any $i\in \mathcal{V}$, then $\Theta_T=0$, and the game $\Gamma(\mathcal{V}, \chi, J^t)$ is reduced to be an offline case. By (\ref{eq58c}) and definitions of regrets in (\ref{A}), there holds $\lim_{T\rightarrow\infty}\frac{\sum_{t=1}^TJ_i(x_i(t), x_{-i}^*)}{T}-J_i(x^*)=0$. Replacing players' actions with the average value defined $\bar{x}_i(t)=\frac{\sum_{k=1}^tx_i(k)}{t}$, and using convexity of $J_i$ yield $\lim_{t\rightarrow\infty}J_i(\bar{x}_i(t), x_{-i}^*)-J_i(x^*)=0$, which implies that $\lim_{t\rightarrow\infty}\bar{x}_i(t)=x_i^*$. Accordingly, $(\bar{x}_i(t), \bar{x}_{-i}(t))$ asymptotically converges to the variational GNE. Thus, the methods and results are also applicable to the offline cases studied in \cite{F. Salehisadaghiani4, F. Salehisad2, S. Liang}-\cite{K. Lu 6}.

In what follows, some lemmas are provided to prove Theorem 1. First, the boundedness of $\|\gamma(t)y_i(t)\|$ is presented.
\begin{lemma}\label{le34}
Under Assumption \ref{as1}, $\|\gamma(t)y_i(t)\|\leq{\sqrt{n}\kappa_2}$ for any $2\leq t\leq T$ and $i\in\mathcal{V}$, where $\kappa_2$ is defined in (\ref{eq4jjk000}).
 \end{lemma}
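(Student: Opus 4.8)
The plan is to prove Lemma \ref{le34} by induction on $t$, exploiting the recursive structure of the dual update in algorithm (\ref{eq4}). The key observation is that the $y_i$-update is a projection onto the non-negative orthant of a convex combination (damped by the stochastic matrix $A$ and the factor $1-\gamma^2(t)$) plus a driving term $\gamma(t)g_i(x_i(t))$. Since $x_i(t)\in\Omega_i$ for all $t$, the driving term is uniformly bounded by $\gamma(t)\kappa_2$ via (\ref{eq4jjk000}). Because $[\cdot]_+$ is non-expansive with respect to the Euclidean norm and vanishes at the origin, the update cannot grow the multiplier faster than this bounded forcing allows. The goal is therefore to show that the accumulated effect of these bounded increments, when rescaled by $\gamma(t)$ and contracted by the monotonicity of $\gamma$, never exceeds $\sqrt{n}\kappa_2$.

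First I would set up the induction hypothesis in the stacked form. Collect $y(t)=[y_1(t)^T,\dots,y_n(t)^T]^T$ and write the update compactly as $y(t+1)=\big[(1-\gamma^2(t))(A\otimes I_r)y(t)+\gamma(t)G(t)\big]_+$, where $G(t)$ stacks the $g_i(x_i(t))$ and satisfies $\|G(t)\|\le\sqrt{n}\kappa_2$ by definition of $\kappa_2$. Using that the positive-part map is non-expansive and $[0]_+=0$, together with the fact that $A\otimes I_r$ is doubly stochastic and hence $\|(A\otimes I_r)v\|\le\|v\|$, I obtain the scalar recursion
\begin{equation}\label{eq:normrec}
\|y(t+1)\|\leq(1-\gamma^2(t))\|y(t)\|+\gamma(t)\sqrt{n}\kappa_2.
\end{equation}
The claim to be established is then the quantitative bound $\gamma(t)\|y(t)\|\le\sqrt{n}\kappa_2$; since $\|\gamma(t)y_i(t)\|\le\|\gamma(t)y(t)\|=\gamma(t)\|y(t)\|$, this immediately gives the per-player statement.

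Next I would run the induction using (\ref{eq:normrec}). The base case $t=2$ follows directly from $y(1)=0$, which gives $\|y(2)\|\le\gamma(1)\sqrt{n}\kappa_2\le\sqrt{n}\kappa_2$ since $0\le\gamma(1)\le1$, hence $\gamma(2)\|y(2)\|\le\sqrt{n}\kappa_2$. For the inductive step, suppose $\gamma(t)\|y(t)\|\le\sqrt{n}\kappa_2$, i.e. $\|y(t)\|\le\sqrt{n}\kappa_2/\gamma(t)$. Substituting into (\ref{eq:normrec}) yields
\begin{equation}\label{eq:step}
\gamma(t+1)\|y(t+1)\|\leq\gamma(t+1)\Big((1-\gamma^2(t))\tfrac{\sqrt{n}\kappa_2}{\gamma(t)}+\gamma(t)\sqrt{n}\kappa_2\Big).
\end{equation}
Because $\gamma$ is non-increasing we have $\gamma(t+1)\le\gamma(t)$, so the right-hand side is bounded above by $\sqrt{n}\kappa_2\big((1-\gamma^2(t))+\gamma^2(t)\big)=\sqrt{n}\kappa_2$, closing the induction.

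The main obstacle, and the reason the specific damping factor $1-\gamma^2(t)$ appears in the algorithm, is precisely the cancellation in the inductive step: the naive bound $(1-\gamma^2(t))/\gamma(t)+\gamma(t)$ must be controlled after multiplying by $\gamma(t+1)$, and it is the pairing of the $1-\gamma^2(t)$ contraction with the monotonicity $\gamma(t+1)\le\gamma(t)$ that produces the clean constant $\sqrt{n}\kappa_2$ rather than a growing quantity. I would take care to verify that $\gamma(t+1)/\gamma(t)\le1$ is genuinely used (not $\gamma(t+1)=\gamma(t)$), and to confirm that the non-expansiveness of $[\cdot]_+$ is applied to the full stacked vector so that the per-component neighbor averaging inside each $y_i$-update is correctly absorbed by the doubly-stochastic structure of $A$. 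Everything else is routine, so I expect the delicate point to be this rescaling argument showing the learning rate exactly neutralizes the potential growth of the multiplier.
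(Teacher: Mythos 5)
Your proof is correct and follows essentially the same route as the paper: both establish, by induction on a recursion obtained from the non-expansiveness of $[\cdot]_+$ and the stochasticity of $A$, the invariant that $\|y(t)\|$ is bounded by $\sqrt{n}\kappa_2$ divided by the learning rate, with the $(1-\gamma^2(t))$ damping exactly cancelling the bounded forcing and the monotonicity $\gamma(t+1)\le\gamma(t)$ transferring the bound forward. The only (cosmetic) difference is that the paper runs the recursion on $\sum_{i}\|y_i(t)\|^2$ via Jensen's inequality, whereas you run it on the Euclidean norm of the stacked vector via the operator-norm bound $\|(A\otimes I_r)v\|\le\|v\|$.
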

\begin{proof}
See APPENDIX. A.
\end{proof}

Now we present the bound of the error between estimate on each player's action and its real action, as well as a bound of the error between each player's estimate on the Lagrange multiplier
and their average. Before going on, we denote the error by $e_{ih}(t)=x_{ih}(t)-x_h(t)$ and $e_i=[e_{1i}^T, \cdots, e_{(i-1)i}^T, e_{(i+1)i}^T, $  $\cdots, e_{ni}^T]^T$.
\begin{lemma}\label{le5}
 Under Assumptions \ref{as1} and \ref{as3}, for any $i\in\mathcal{V}$ and $2\leq t\leq T$,\\
(i) $\|e_{i}(t)\|\leq \lambda^{t-1}\|{e}_{i}(1)\|+2\sqrt{n-1}\kappa_1\sum_{k=0}^{t-2}\lambda^{k}\gamma(t-k-1)$;\\
(ii) $\|e_{i}(t)\|^2\leq \lambda^{t-1}\|{e}_{i}(1)\|^2+\rho\sum_{k=0}^{t-2}\lambda^{k}\gamma(t-k-1)$;\\
(iii) $\|{y_i(t)}-{\bar{y}(t)} \|\leq2(n+{\sqrt{n}})\kappa_2\sum_{k=0}^{t-2}(\sigma_2(A))^k\gamma(t-1-k)$;\\
(iv) $\|{y_i(t)}-{\bar{y}(t)}\|^2\leq\varrho\sum_{k=0}^{t-2}(\sigma_2(A))^{k}\gamma(t-k-1)$\\
where ${\bar{y}(t)}=\frac{1}{n}\sum_{i=1}^n{y}_i(t)$, $\rho=\frac{8({n-1})\kappa_1^2}{1-\lambda}$, $\varrho=\frac{4(n+{\sqrt{n}})^2\kappa_2^2}{1-\sigma_2(A)}$, $\kappa_1$ and $\kappa_2$ are defined in (\ref{eq4jjk000}), and $\lambda$ and $\sigma_2(A)$ are defined in (\ref{oo}) and (\ref{uuu}), respectively.
  \end{lemma}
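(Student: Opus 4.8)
The plan is to analyze the two consensus-type recursions in algorithm~(\ref{eq4}) separately, since parts (i)--(ii) concern the action-estimation error $e_{ih}(t)=x_{ih}(t)-x_h(t)$ governed by the leader-following consensus update, while parts (iii)--(iv) concern the multiplier disagreement $y_i(t)-\bar y(t)$ governed by the doubly-stochastic consensus-plus-perturbation update. In both cases I would first write the stacked error dynamics as a linear contraction plus a bounded forcing term, then unroll the recursion to obtain a geometric sum, and finally bound each forcing term using the boundedness constants from~(\ref{eq4jjk000}) and Lemma~\ref{le34}.

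For part (i), I would stack the estimates of player~$i$ on all players $h\neq i$ and subtract the true actions. Using $e_i=[e_{1i}^T,\cdots,e_{ni}^T]^T$ (omitting the $i$-th block) and the structure of the update ${x}_{ih}(t+1)=\sum_{k\in\mathcal{N}_i/\{h\}}a_{ik}x_{kh}(t)+a_{ih}x_h(t)$, the error obeys a recursion of the form $e_i(t+1)=(A_i^-\otimes I_m)e_i(t)+\xi_i(t)$, where the forcing term $\xi_i(t)$ collects the increments $x_h(t+1)-x_h(t)$ of the true actions. From the $x_i$-update in~(\ref{eq4}), each such increment is $\gamma(t)\big(P_{\Omega_i}[\cdots]-x_i(t)\big)$, whose norm is bounded by $2\kappa_1\gamma(t)$ since both $x_i(t)$ and the projection lie in the compact set $\Omega_i$ of radius $\kappa_1$. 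Taking norms, using $\|A_i^-\otimes I_m\|=\lambda$ from~(\ref{oo}), and unrolling from $t=1$ gives $\|e_i(t)\|\leq\lambda^{t-1}\|e_i(1)\|+\sum_{k=0}^{t-2}\lambda^k\|\xi_i(t-1-k)\|$; bounding $\|\xi_i(\cdot)\|\leq 2\sqrt{n-1}\,\kappa_1\gamma(\cdot)$ (the $\sqrt{n-1}$ from stacking $n-1$ blocks) yields~(i). Part~(ii) then follows by squaring~(i), applying $(a+b)^2\le 2a^2+2b^2$ or a Cauchy--Schwarz argument on the geometric sum $\sum_k\lambda^k=\frac{1}{1-\lambda}$, and absorbing constants into $\rho=\frac{8(n-1)\kappa_1^2}{1-\lambda}$.

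For parts (iii)--(iv), I would stack $Y(t)=[y_1(t)^T,\cdots,y_n(t)^T]^T$ and exploit that the update ${y_i(t+1)}=\big[(1-\gamma^2(t))\sum_{j\in\mathcal{N}_i}a_{ij}y_j(t)+\gamma(t)g_i(x_i(t))\big]_+$ is a doubly-stochastic consensus step (since $A\textbf{1}_n=\textbf{1}_n$ and $A$ is symmetric) composed with the projection $[\cdot]_+$ and a perturbation of size $\|\gamma(t)g_i(x_i(t))\|\le\kappa_2\gamma(t)$ together with the decay term $\gamma^2(t)\sum_j a_{ij}y_j(t)$. The key is to project onto the disagreement subspace orthogonal to $\textbf{1}_n$, where the averaging operator contracts at rate $\sigma_2(A)$ from~(\ref{uuu}). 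Here the main obstacle is controlling the decay term $\gamma^2(t)\sum_j a_{ij}y_j(t)$: I would bound $\|\gamma^2(t)y_j(t)\|=\gamma(t)\cdot\|\gamma(t)y_j(t)\|\le\sqrt{n}\,\kappa_2\gamma(t)$ using Lemma~\ref{le34}, so the total per-step perturbation is $\mathcal{O}((n+\sqrt{n})\kappa_2\gamma(t))$. The nonexpansiveness of $[\cdot]_+$ lets me carry the contraction through the projection, and unrolling as before produces $\|y_i(t)-\bar y(t)\|\le 2(n+\sqrt{n})\kappa_2\sum_{k=0}^{t-2}(\sigma_2(A))^k\gamma(t-1-k)$, giving~(iii); squaring and using $\sum_k(\sigma_2(A))^k=\frac{1}{1-\sigma_2(A)}$ yields~(iv) with $\varrho=\frac{4(n+\sqrt{n})^2\kappa_2^2}{1-\sigma_2(A)}$. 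I expect the multiplier analysis to be the delicate part, precisely because the coupling of $\gamma^2(t)$ with the unbounded-in-principle $y_j(t)$ would break the bound without the uniform control $\|\gamma(t)y_i(t)\|\le\sqrt{n}\,\kappa_2$ established in Lemma~\ref{le34}.
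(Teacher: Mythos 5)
Your proposal follows essentially the same route as the paper: the same stacked error recursions $e_h(t+1)=(A_h^-\otimes I_m)e_h(t)+\nu_h(t)$ and $Y(t+1)=(A\otimes I_r)Y(t)+\theta(t)$, the same $2\kappa_1\gamma(t)$ bound on the action increments, and the same use of Lemma \ref{le34} to tame the $\gamma^2(t)y_j(t)$ term before contracting at rates $\lambda$ and $\sigma_2(A)$ on the disagreement subspace. The only spot needing care is part (ii): the crude bound $(a+b)^2\le 2a^2+2b^2$ doubles the coefficient of $\lambda^{t-1}\|e_i(1)\|^2$, so you should take your Cauchy--Schwarz alternative together with $\|e_i(1)\|\le\sqrt{n-1}\,\kappa_1$ to absorb the cross term, which is exactly what the paper does.
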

 \begin{proof} See APPENDIX. B.
\end{proof}

Note that $\lim_{t\rightarrow\infty}\lambda^{t-1}=0$. If $\gamma(t)$ is diminishing, i.e., $\lim_{t\rightarrow\infty}\gamma{(t)}=0$, then $\lim_{t\rightarrow\infty}\sum_{k=0}^{t-2}\lambda^{k}\gamma(t-k-1)$ and $\lim_{t\rightarrow\infty}\sum_{k=0}^{t-2}(\sigma_2(A))^{k}\gamma(t-k-1)=0$, accordingly, $\lim_{t\rightarrow\infty}\|e_{i}(t)\|=0$ and $\lim_{t\rightarrow\infty}\|{y}_i(t)-\bar{y}(t)\|=0$, which implies that each player can estimate real actions of others, and all ${y}_i(t)$, $i=1, \cdots, n$ will approach to a common value as time evolves.
Next, we establish an upper bound of the accumulated square error between players' actions and the GNE.
\begin{lemma}\label{le2}Under Assumptions \ref{as2} and \ref{as1}, let $x(t)=(x_i(t), x_{-i}(t))$,

\begin{equation}\label{eq9}\begin{split}
&\sum_{t=1}^T\|x(t)-x^*(t)\|^2\\
&\leq\frac{2\kappa_1\sqrt{n}}{\mu\gamma^2(T)}(\Theta_T+{\kappa_1\sqrt{n}})+\frac{(1+\kappa_0)\ell^2}{2\mu}\sum_{t=1}^T\sum_{i=1}^n\|e_i(t)\|^2\\
&~~~+\frac{\pi_1}{\mu}\sum_{t=1}^T\gamma(t)+\frac{\pi_2}{\mu}\sum_{t=1}^T\sum_{i=1}^n\|e_i(t)\|+\frac{\kappa_2}{\mu}\sum_{t=1}^T\sum_{i=1}^n\|y_i(t)\\
&~~~-\bar{y}(t)\|-\frac{1}{\mu}\sum_{t=1}^T\sum_{i=1}^n \gamma(t)(g_i(x_i(t)))^Ty_i(t)
\end{split}\end{equation}
where $\pi_1=\tau/2$ $+n^2(\kappa_0^2+\kappa_0)\kappa_2^2+2\kappa_0\kappa_3\kappa_2n\sqrt{n}+n\kappa_0\theta^2$, $\tau=4n\kappa_0\theta(\theta\kappa_0+\kappa_1+\kappa_3)+4n\kappa_3^2$, $\pi_2=2\sqrt{n-1}\ell(\kappa_1+\kappa_3)$, $\theta=\sup_{ t\in\lfloor T\rfloor}\|y^*(t)\|$, $\kappa_i$, $i=0, \cdots, 3$ are defined in (\ref{eq4jjk000}), and $\mu$ is the strong monotonicity parameter given in Assumption 1.
 \end{lemma}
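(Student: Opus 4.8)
The plan is to derive a one-step ``almost-descent'' inequality for $\|x_i(t+1)-x_i^*(t)\|^2$ and then sum it. First I would expand the primal update in (\ref{eq4}): writing $v_i(t)=P_{\Omega_i}[x_i(t)-\gamma(t)\phi_i(t)]$ with $\phi_i(t):=\nabla_{x_i}J_i^t(\textbf{x}_i(t))+\gamma(t)\nabla_{x_i}g_i(x_i(t))y_i(t)$, the update reads $x_i(t+1)=(1-\gamma(t))x_i(t)+\gamma(t)v_i(t)$. Since $0\le\gamma(t)\le1$ and $\Omega_i$ is convex (Assumption \ref{as1}), an induction shows $x_i(t)$ stays in $\Omega_i$, so each iterate is feasible and convexity of $\|\cdot\|^2$ gives $\|x_i(t+1)-x_i^*(t)\|^2\le(1-\gamma(t))\|x_i(t)-x_i^*(t)\|^2+\gamma(t)\|v_i(t)-x_i^*(t)\|^2$. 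For the last term I use that the KKT relation (\ref{eq4jjk}) writes $x_i^*(t)=P_{\Omega_i}[x_i^*(t)-\gamma(t)\phi_i^*(t)]$, with $\phi_i^*(t)$ the analogue of $\phi_i(t)$ at the equilibrium; non-expansiveness of $P_{\Omega_i}$ then expands $\|v_i(t)-x_i^*(t)\|^2$ into $\|x_i(t)-x_i^*(t)\|^2$ minus twice a cross term $\gamma(t)(\phi_i(t)-\phi_i^*(t))^T(x_i(t)-x_i^*(t))$ plus a $\gamma^2(t)$ remainder. Substituting back cancels the $\|x_i(t)-x_i^*(t)\|^2$ terms, leaving a cross term scaled by $\gamma^2(t)$ and a remainder scaled by $\gamma^3(t)$.

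Next I would treat the cross term, which is where Assumption \ref{as2} enters. I split $\phi_i-\phi_i^*$ into a \emph{monotonicity} piece $\nabla_{x_i}J_i^t(x(t))-\nabla_{x_i}J_i^t(x^*(t))$, an \emph{estimation-error} piece $\nabla_{x_i}J_i^t(\textbf{x}_i(t))-\nabla_{x_i}J_i^t(x(t))$, and a \emph{multiplier} piece $\gamma(t)(\nabla_{x_i}g_i(x_i(t))y_i(t)-\nabla_{x_i}g_i(x_i^*(t))y^*(t))$. Summed over $i$ and paired with $x_i(t)-x_i^*(t)$, the monotonicity piece yields $(F^t(x(t))-F^t(x^*(t)))^T(x(t)-x^*(t))\ge\mu\|x(t)-x^*(t)\|^2$. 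The estimation-error piece is controlled by Lipschitz continuity (Assumption \ref{as2}(ii)), which bounds its norm by $\ell\|e_i(t)\|$; pairing and applying Young's inequality, together with the boundedness constants in (\ref{eq4jjk000}) and the $\gamma^3(t)\|\phi_i-\phi_i^*\|^2$ remainder, produces the $\frac{(1+\kappa_0)\ell^2}{2\mu}\|e_i(t)\|^2$, $\pi_2$, and $\pi_1$ contributions. The multiplier piece is handled by convexity of $g_i$ (to trade $\nabla_{x_i}g_i(x_i(t))^T(x_i(t)-x_i^*(t))$ for $g_i(x_i(t))-g_i(x_i^*(t))$), the a priori bound $\|\gamma(t)y_i(t)\|\le\sqrt{n}\kappa_2$ from Lemma \ref{le34}, and the consensus error $\|y_i(t)-\bar{y}(t)\|$ from Lemma \ref{le5}, yielding the $\frac{\kappa_2}{\mu}\|y_i(t)-\bar{y}(t)\|$ term and the $-\frac{1}{\mu}\gamma(t)(g_i(x_i(t)))^Ty_i(t)$ term.

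Collecting everything gives a one-step inequality of the form $2\mu\gamma^2(t)\|x(t)-x^*(t)\|^2\le\|x(t)-x^*(t)\|^2-\|x(t+1)-x^*(t)\|^2+R(t)$, where $R(t)$ is $\gamma^2(t)$ times the residual terms above. Dividing by $\gamma^2(t)$ clears those factors so the residuals take exactly the form in the statement, and summing over $t$ leaves the quantity $\sum_t\gamma^{-2}(t)(\|x(t)-x^*(t)\|^2-\|x(t+1)-x^*(t)\|^2)$. To control it I first replace $x^*(t)$ by $x^*(t+1)$ inside $\|x(t+1)-x^*(t)\|$, paying a cost of order $\kappa_1\sqrt{n}\|x^*(t+1)-x^*(t)\|$ that sums to $\Theta_T$ in (\ref{QQqq}), using $\|x(t+1)-x^*(t+1)\|\le2\kappa_1\sqrt{n}$ (both iterates lie in $\Omega$). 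The remaining genuinely telescoping part $\sum_t\gamma^{-2}(t)(\|x(t)-x^*(t)\|^2-\|x(t+1)-x^*(t+1)\|^2)$ I evaluate by summation by parts: since $\gamma$ is non-increasing the increments $\gamma^{-2}(t)-\gamma^{-2}(t-1)$ are non-negative, and bounding $\|x(t)-x^*(t)\|^2\le4\kappa_1^2n$ telescopes the coefficients down to $\gamma^{-2}(T)$. Using $\gamma^{-2}(t)\le\gamma^{-2}(T)$ on the $\Theta_T$ contribution and dividing by $2\mu$ then yields exactly $\frac{2\kappa_1\sqrt{n}}{\mu\gamma^2(T)}(\Theta_T+\kappa_1\sqrt{n})$.

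The main obstacle is this last, \emph{dynamic} telescoping step: both the comparator $x^*(t)$ and the weight $\gamma^{-2}(t)$ vary with $t$, so a plain telescope fails and one must simultaneously absorb the comparator drift into $\Theta_T$ and the weight drift into $\gamma^{-2}(T)$ via summation by parts, relying on the uniform diameter bound of the feasible set. A secondary difficulty, flagged in the introduction, is the coupling of the multiplier and the argument through the $\gamma(t)\nabla_{x_i}g_i(x_i(t))y_i(t)$ term; keeping the associated remainder bounded is precisely what forces the slowly decaying rate and rests essentially on Lemma \ref{le34}, so that $\gamma(t)y_i(t)$ never blows up.
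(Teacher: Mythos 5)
Your proposal follows essentially the same route as the paper's proof: expand the convex-combination update via Jensen's inequality, use the KKT fixed-point form of $x_i^*(t)$ with non-expansiveness of the projection, split the cross term into the strongly monotone piece, the Lipschitz-controlled estimation-error piece, and the multiplier piece handled by convexity of $g_i$ together with Lemma \ref{le34}, then sum with the comparator shift absorbed into $\Theta_T$ and a summation-by-parts on the non-increasing weights $\gamma^{-2}(t)$ bounded by the diameter of $\Omega$. The only cosmetic difference is that the paper shifts the comparator from $x^*(t)$ to $x^*(t+1)$ at the start of the one-step estimate rather than during the final summation; the resulting terms and constants are the same.
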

\begin{proof}
See APPENDIX. C.
 \end{proof}

Then, the lower bound of $\sum_{t=1}^T\sum_{i=1}^n \gamma(t)(g_i(x_i(t)))^Ty_i(t)$ in (\ref{eq9}) is presented.
\begin{lemma}\label{le3} Under Assumption \ref{as1}, for any $y\in\mathbb{R}^r_+$,
\begin{equation}\label{eq14}\begin{split}
&\sum_{t=1}^T\sum_{i=1}^n \gamma(t)\big((g_i(x_i(t)))^T{y}_i(t)-(g_i(x_i(t)))^T{y}\big)\\
&\geq-\frac{n}{2}\left(1+\sum_{t=1}^T\gamma^2(t)\right)\|y\|^2-\frac{9}{2}\sum_{t=1}^T\sum_{i=1}^n\| {y}_i(t)-\bar{y}(t)\|^2\\
&~~~-n\kappa_2^2(1+n)\sum_{t=1}^T{\gamma(t)}-2\kappa_2(1+\sqrt{n})\sum_{t=1}^T\sum_{i=1}^n\|{y}_i(t)-{\bar{y}}(t)\|.\\
\end{split}\end{equation}
 \end{lemma}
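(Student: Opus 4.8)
The plan is to begin from the non-expansiveness of the Euclidean projection onto $\mathbb{R}^r_+$. Abbreviating $g_i(t)=g_i(x_i(t))$ (so that $\|g_i(t)\|\le\kappa_2$ because $x_i(t)\in\Omega_i$) and $w_i(t)=\sum_{j\in\mathcal{N}_i}a_{ij}y_j(t)$, the dual step in (\ref{eq4}) reads $y_i(t+1)=[(1-\gamma^2(t))w_i(t)+\gamma(t)g_i(t)]_+$. Since the fixed comparison point $y\in\mathbb{R}^r_+$ lies in the projection set, $\|y_i(t+1)-y\|^2\le\|(1-\gamma^2(t))w_i(t)+\gamma(t)g_i(t)-y\|^2$, and expanding the right-hand side isolates the cross term $2\gamma(t)g_i(t)^T[(1-\gamma^2(t))w_i(t)-y]$ together with $\|(1-\gamma^2(t))w_i(t)-y\|^2$ and $\gamma^2(t)\|g_i(t)\|^2$. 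I will rearrange this into a per-step lower bound on the target quantity $\gamma(t)g_i(t)^T(y_i(t)-y)$.

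Two elementary reductions make the bound telescope. First, to remove the weight $1-\gamma^2(t)$ from the leading square I will use $\|(1-\gamma^2(t))w_i(t)-y\|^2\le\|w_i(t)-y\|^2+\gamma^2(t)\|y\|^2$, which holds because $1-\gamma^2(t)\in[0,1]$ and $-\|w_i(t)\|^2+2w_i(t)^Ty\le\|y\|^2$; this single step is what produces the $\gamma^2(t)\|y\|^2$ contribution. Second, to turn the mixed quantity $w_i(t)$ into the agent's own multiplier I will write $(1-\gamma^2(t))g_i(t)^Tw_i(t)=g_i(t)^Ty_i(t)+g_i(t)^T(w_i(t)-y_i(t))-\gamma^2(t)g_i(t)^Tw_i(t)$, so that the wanted term $g_i(t)^T(y_i(t)-y)$ emerges together with a consensus residual $g_i(t)^T(w_i(t)-y_i(t))$ and a cubic term $\gamma^3(t)g_i(t)^Tw_i(t)$.

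Next I will sum over $i$ and telescope over $t$. Convexity of $\|\cdot\|^2$ together with the double stochasticity of $A$ from Assumption \ref{as3} gives $\sum_i\|w_i(t)-y\|^2\le\sum_i\|y_i(t)-y\|^2$, so the differences $\sum_i(\|y_i(t+1)-y\|^2-\|w_i(t)-y\|^2)$ telescope; dropping the nonnegative terminal term and using $y_i(1)=0$ leaves the boundary contribution $-\sum_i\|y_i(1)-y\|^2=-n\|y\|^2$. Adding the accumulated $\gamma^2(t)\|y\|^2$ pieces, which sum to $n\|y\|^2\sum_{t=1}^T\gamma^2(t)$, and halving reproduces exactly the factor $-\frac{n}{2}(1+\sum_{t=1}^T\gamma^2(t))\|y\|^2$.

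It then remains to estimate the residuals. For the consensus residual, $w_i(t)-y_i(t)=\sum_{j}a_{ij}(y_j(t)-y_i(t))$ and the triangle inequality through $\bar{y}(t)$, again with double stochasticity, give $\sum_i\|w_i(t)-y_i(t)\|\le2\sum_i\|y_i(t)-\bar{y}(t)\|$; with $\|g_i(t)\|\le\kappa_2$, $\gamma(t)\le1$, and the Young estimate $2ab\le a^2+b^2$, this residual is controlled by the consensus-error terms $2\kappa_2(1+\sqrt{n})\sum_{t,i}\|y_i(t)-\bar{y}(t)\|$ and $\frac{9}{2}\sum_{t,i}\|y_i(t)-\bar{y}(t)\|^2$ appearing in (\ref{eq14}). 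The cubic term is tamed by Lemma \ref{le34}: from $\|y_i(t)\|\le\sqrt{n}\kappa_2/\gamma(t)$ one gets $\|w_i(t)\|\le\sqrt{n}\kappa_2/\gamma(t)$, hence $\gamma^3(t)|g_i(t)^Tw_i(t)|\le\gamma^2(t)\sqrt{n}\kappa_2^2\le\gamma(t)\sqrt{n}\kappa_2^2$, and together with $\gamma^2(t)\|g_i(t)\|^2\le\gamma(t)\kappa_2^2$ these fold into the $n\kappa_2^2(1+n)\sum_{t=1}^T\gamma(t)$ term; collecting everything gives (\ref{eq14}). I expect the main obstacle to lie precisely in this bookkeeping of the $1-\gamma^2(t)$ weight: it must be split so that the leading square telescopes through the elementary bound above while the leftover $\gamma^2$ and $\gamma^3$ pieces are absorbed into the $\gamma(t)$-summable terms via Lemma \ref{le34}, and every occurrence of $w_i(t)$ must be converted to $y_i(t)$ so that only the stated consensus errors survive with the correct constants.
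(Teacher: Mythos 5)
Your proposal is correct, and it yields the stated inequality (in fact a slightly stronger one). The overall strategy coincides with the paper's: both start from the non-expansiveness of $[\cdot]_+$ applied to the dual update, telescope $\sum_i\|y_i(t)-y\|^2$ using $y_i(1)=0$, invoke Lemma~\ref{le34} to tame the $\gamma^2(t)y_i(t)$ damping term, and convert network mismatches into the consensus errors $\|y_i(t)-\bar y(t)\|$ via double stochasticity of $A$. The difference lies in how the argument of the projection is decomposed. The paper writes it as $(1-\gamma^2(t))\sum_j a_{ij}(y_j(t)-y_i(t))+(y_i(t)-y)+\gamma(t)\big(g_i(x_i(t))-\gamma(t)y_i(t)\big)$, so the leading square is $\|y_i(t)-y\|^2$ directly, the $\tfrac{\gamma^2(t)}{2}\|y\|^2$ contribution arises from the inner product $\langle y_i(t)-y,\,-\gamma(t)y_i(t)\rangle$ via Young's inequality, and an extra cross term $\sum_{i,j}a_{ij}\langle y_j(t)-y_i(t),\,y_i(t)-y\rangle$ appears which must be absorbed through two auxiliary bounds, producing the $\tfrac{9}{2}\sum_{t,i}\|y_i(t)-\bar y(t)\|^2$ term. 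You instead keep $(1-\gamma^2(t))w_i(t)-y$ as the leading square, strip the weight with the elementary bound $\|(1-\gamma^2(t))w_i(t)-y\|^2\le\|w_i(t)-y\|^2+\gamma^2(t)\|y\|^2$, and telescope through Jensen's inequality $\sum_i\|w_i(t)-y\|^2\le\sum_i\|y_i(t)-y\|^2$; this sidesteps the paper's cross term entirely, so the quadratic consensus-error term $-\tfrac{9}{2}\sum_{t,i}\|y_i(t)-\bar y(t)\|^2$ is not actually needed in your version (it can simply be dropped from your lower bound, which only makes the claimed inequality easier). Both routes are elementary and of comparable length; yours is marginally tighter, while the paper's keeps the per-agent recursion in a form that parallels its primal analysis.
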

\begin{proof}
See APPENDIX. D.
 \end{proof}

Now, we can present the proof of Theorem 1.

\textbf{Proof of Theorem 1.}
Letting $\gamma(t)=\frac{c}{\sqrt[3]{dt+c}}$, we have
 \[\begin{split}
 &\sum_{t=2}^T\sum_{k=0}^{t-2}\lambda^{k}\gamma(t-k-1)=\sum_{k=0}^{T-2}\lambda^{k}\sum_{t=1}^{T-k-1}\gamma(t)\leq \mathcal{O}\left(\sum_{t=1}^{T}\gamma(t)\right)
 \end{split}\]
where the equation holds by changing the order of summations. Similarly, $\sum_{t=2}^T\sum_{k=0}^{t-2}(\sigma(A))^{k}\gamma(t-k-1)\leq\mathcal{O}\left(\sum_{t=1}^{T}\gamma(t)\right)$. By in Lemma \ref{le5}, we have
\begin{equation}\label{eq51}\left\{ {\begin{array}{*{20}{c}}
  \begin{split}
 &\sum_{t=1}^T\sum_{i=1}^n\|e_i(t)\|\leq \mathcal{O}\left(\sum_{t=1}^{T}\gamma(t)\right) \\
   &\sum_{t=1}^T\sum_{i=1}^n\|e_i(t)\|^2\leq  \mathcal{O}\left(\sum_{t=1}^{T}\gamma(t)\right) \\
   &\sum_{t=1}^T\sum_{i=1}^n\|y_i(t)-\bar{y}(t)\|\leq \mathcal{O}\left(\sum_{t=1}^{T}\gamma(t)\right)\\
   &\sum_{t=1}^T\sum_{i=1}^n\|y_i(t)-\bar{y}(t)\|^2\leq\mathcal{O}\left(\sum_{t=1}^{T}\gamma(t)\right).
\end{split}\end{array}} \right.\end{equation}
From Lemma \ref{le34}, we know that $\|\gamma(t)y_i(t)\|\leq{\sqrt{n}\kappa_2}$. Submitting (\ref{eq51}) into (\ref{eq9}) in Lemma \ref{le2} yields
\begin{equation}\label{eq55}\begin{split}
&\sum_{t=1}^T\|x(t)-x^*(t)\|^2+\frac{1}{\mu}\sum_{t=1}^T\sum_{i=1}^n \gamma(t)(g_i(x_i(t)))^Ty_i(t)\leq\mathcal{O}\left(\frac{\Theta_T+1}{\gamma^2(T)}+\sum_{t=1}^{T}\gamma(t)\right).
\end{split}\end{equation}
Furthermore, submitting (\ref{eq51}) into (\ref{eq14}) in Lemma \ref{le3} yields
\begin{equation}\label{eq56}\begin{split}
&-\sum_{t=1}^T\gamma(t)\sum_{i=1}^n (g_i(x_i(t)))^Ty_i(t)+\sum_{t=1}^T\gamma(t)\sum_{i=1}^n\big(g_i(x_i(t))\big)^T{y}-\frac{n}{2}\left(1+\sum_{t=1}^T\gamma^2(t)\right)\|y\|^2\leq\mathcal{O}\left(\sum_{t=1}^{T}\gamma(t)\right)\\
\end{split}\end{equation}
for any $y\in\mathbb{R}^r_+$. Combining (\ref{eq55}) and (\ref{eq56}) results in that
\begin{equation}\label{eq57}\begin{split}
&\sum_{t=1}^T\|x(t)-x^*(t)\|^2+\frac{1}{\mu}\sum_{t=1}^T\gamma(t)\sum_{i=1}^n(g_i(x_i(t)))^T{y}\\
&-\frac{n}{2\mu}\left(1+\sum_{t=1}^T\gamma^2(t)\right)\|y\|^2\leq \mathcal{O}\left(\frac{\Theta_T+1}{\gamma^2(T)}+\sum_{t=1}^{T}\gamma(t)\right).
\end{split}\end{equation}
Note that $\|J_i^t(x_i(t), x_{-i}^*(t))-J_i^t(x^*(t))\|\leq\kappa_3\|x_i(t)-x_i^*(t)\|$. Thus,
\begin{equation}\label{eqgksq}\begin{split}
\mathcal{R}_i(T)&\leq{\kappa_3}\sum_{t=1}^T\|x_i(t)-x_i^*(t)\|\\
&\leq {\kappa_3}\sqrt{T\sum_{t=1}^T\|x(t)-x^*(t)\|^2}.
\end{split}\end{equation}
Letting $y$ in inequality (\ref{eq57}) be 0, we have $\sum_{t=1}^T\|x(t)-x^*(t)\|^2\leq \mathcal{O}\left(\frac{\Theta_T+1}{\gamma^2(T)}+\sum_{t=1}^{T}\gamma(t)\right)$. Together with (\ref{eqgksq}), it immediately implies inequality (\ref{eq58}).

Furthermore, due to the arbitrariness of $y\in\mathbb{R}^m_+$, letting $$y= \frac{\left[\sum_{t=1}^T\gamma(t)\sum_{i=1}^ng_i(x_i(t))\right]_+}{n\left(1+\sum_{t=1}^T\gamma^2(t)\right)}$$ one has
\[\begin{split}
&\left(\sum_{t=1}^T\gamma(t)\sum_{i=1}^ng_i(x_i(t))\right)^T{y}-\frac{n}{2}\left(1+\sum_{t=1}^T\gamma^2(t)\right)\|y\|^2\\
&=\frac{\left[\sum_{t=1}^T\gamma(t)\sum_{i=1}^ng_i(x_i(t))\right]_+^2}{2n\left(1+\sum_{t=1}^T\gamma^2(t)\right)}\\
&\geq\frac{(\gamma(T)\mathcal{R}_g(t))^2}{2n\left(1+\sum_{t=1}^T\gamma^2(t)\right)}.
\end{split}\]
Together with (\ref{eq57}), it follows
\begin{equation}\label{eq59}\begin{split}
&\gamma^2(T)(\mathcal{R}_g(t))^2\leq \mathcal{O}\left({{\left(\frac{\Theta_T}{\gamma^2(T)}+\sum_{t=1}^{T}\gamma(t)\right)\left(1+\sum_{t=1}^{T}\gamma^2(t)\right)}}\right).
\end{split}\end{equation}
Inequality (\ref{eq59}) implies inequality (\ref{eq70}). This completes the proof.
\QEDA

\section{A simulation example}\label{se3}
In this section, we illustrate the achieved results by using
our algorithm to deal with an online Nash-Cournot game $\Gamma(\mathcal{V}, \chi, J^t)$ with production constraints and market capacity constraints. In the Nash-Cournot game, there exist five firms (players) that produce same production, and these firms are labeled by index set $\mathcal{V}=\{1,\cdots, 5\}$. The firms communicate with each other via a connected graph $\mathcal{G}(A)$ shown in Fig. \ref{fig1}, where each element of weighted matrix $A=(a_{ij})_{n\times n}$ is set
to be $a_{ij}=\frac{1}{|\mathcal{N}_i|}$, and $|\mathcal{N}_i|$ is the element number in $\mathcal{N}_i$. Let the quantity produced by firm $i$ be ${x}_i\in\mathbb{R}$. Due to existence of some changeable factors such as marginal costs, the production cost and the demand price could be time-varying \cite{yehu55}. The production cost and the demand price of firm $i$ are given by $p_i^t(x_i)=\alpha_i(t)x_i$ and $d_i^t(x)=\beta_i(t)-\sum_{j=1}^5x_j$, respectively, where $\alpha_i(t), \beta_i(t)>0$ and $t\in[0, T]$. Then, based on formulations in \cite{yehu55} and \cite{Gadjov11}, the overall cost function of firm $i$ follows that $J_i^t(x_i, x_{-i})=p_i^t(x_i)-x_id_i^t(x)$ for any $t\in[0, T]$ and $i\in\mathcal{V}$. Moreover, the production constraint of each firm is given by $\Omega_i\subset \mathbb{R}$, and the market capacity constraint is given by shared inequality constraint $\sum_{i=1}^5x_i\leq \sum_{i=1}^5l_i$.
To achieve maximum benefit, each firm aims to minimize its own cost function $J_i^t(x_i, x_{-i})$.

In this formulation, we set $T=300$, $\alpha_i(t)=\sin(t/12)$, $\beta_i(t)=45+5i-0.5i\sin(t/12)$, $l_1=10$, $l_2=l5=15$, $l_3=8$, $l_4=8$ and $\Omega_i=\{x|0\leq x\leq30\}$, $i\in\mathcal{V}$. In the offline setting, we denote the GNE of game
$\Gamma(\mathcal{V}, \chi, J^t)$ by $x^*(t)=[x_1^*(t), \cdots, x_n^*(t)]^T$. At each iteration time, $x_i^*(t)$ approximates to $P_{\Omega_i}(\xi_i)$, where $\xi_i=\frac{8\sin(t/12)}{3}-5+5i$. Now suppose that player $i$ can only have access to $J_i^{t-1}, l_i, \Omega_i$ and its neighbors' actions. Algorithm (\ref{eq4}) is applied to the problem. Initial values are given by $x_1(0)=0$, $x_2(0)=x_5(0)=30$, $x_3(0)=x_4(0)=10$, $x_{ij}(0)=10$ and $y_i(0)=1$, $i,~j\in\mathcal{V}$, and the learning rate is set to be that $\gamma(t)={{\sqrt[3]{\frac{6}{0.1t+6}}}}$.
Fig. \ref{fig2} shows the trajectories of players' real actions. The trajectories of average regrets ${\mathcal{R}}_i(t)/t$, $i\in\mathcal{V}$ and average violation ${\mathcal{R}}_g(t)/t$ are shown in Fig. \ref{fig3} and in Fig. \ref{fig4}, respectively. From Fig. \ref{fig3} and  Fig. \ref{fig4}, we see that both ${\mathcal{R}}_i(t)/t$ and ${\mathcal{R}}_g(t)/t$ approximately decay to zero after a period of time, which implies that both the regret and the violation of inequality constraint sublinearly increase. These observations are consistent with the results
established in Theorem 1.
\begin{figure}
\begin{minipage}[t]{0.5\linewidth}
\centering
\includegraphics[width=1\textwidth]{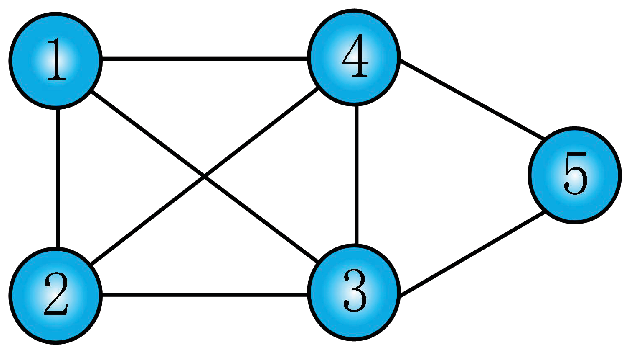}
\caption{Communication graph $\mathcal{G}(A)$.}\label{fig1}
\end{minipage}
\begin{minipage}[t]{0.5\linewidth}
\centering
\includegraphics[width=0.9\textwidth]{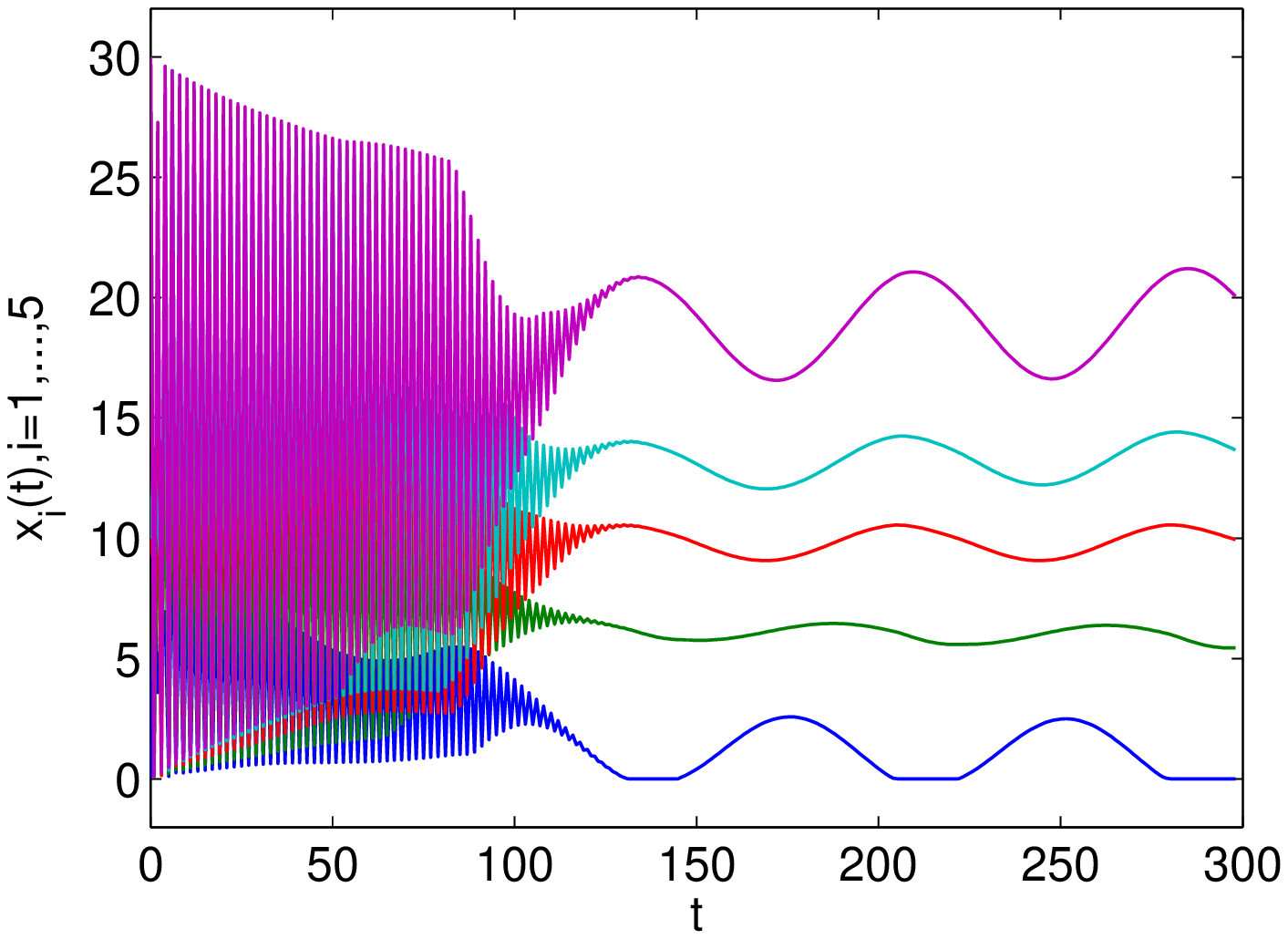}
\caption{The trajectories of players' actions ${x}_i(t)$, $i=1,\cdots,5$.}\label{fig2}
\end{minipage}
\end{figure}

\begin{figure}
\begin{minipage}[t]{0.5\linewidth}
\centering
\includegraphics[width=0.9\textwidth]{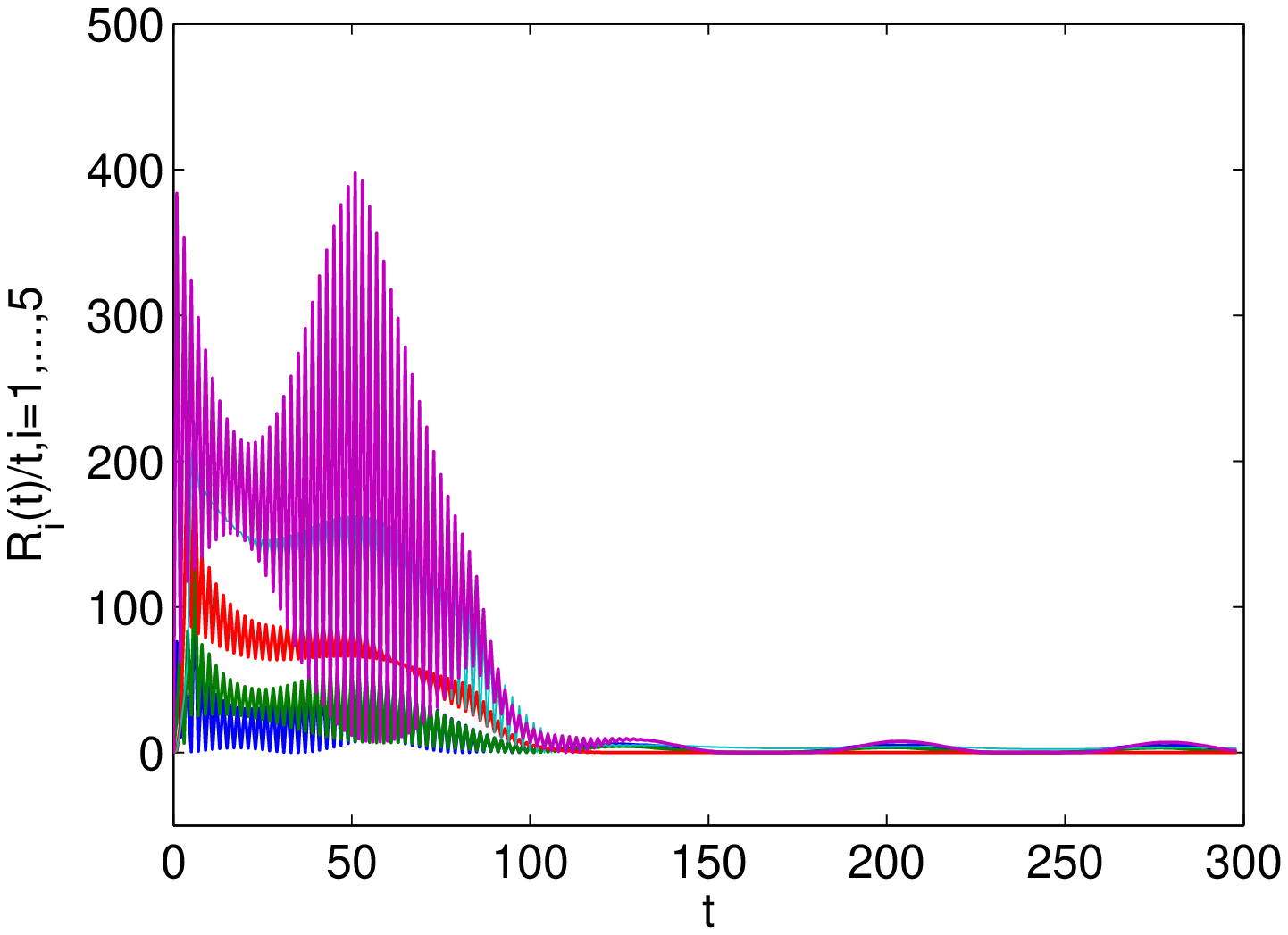}
\caption{The trajectory of ${\mathcal{R}_i}(t)/t$, $i=1,\cdots,5$.}\label{fig3}
\end{minipage}
\begin{minipage}[t]{0.5\linewidth}
\centering
\includegraphics[width=0.9\textwidth]{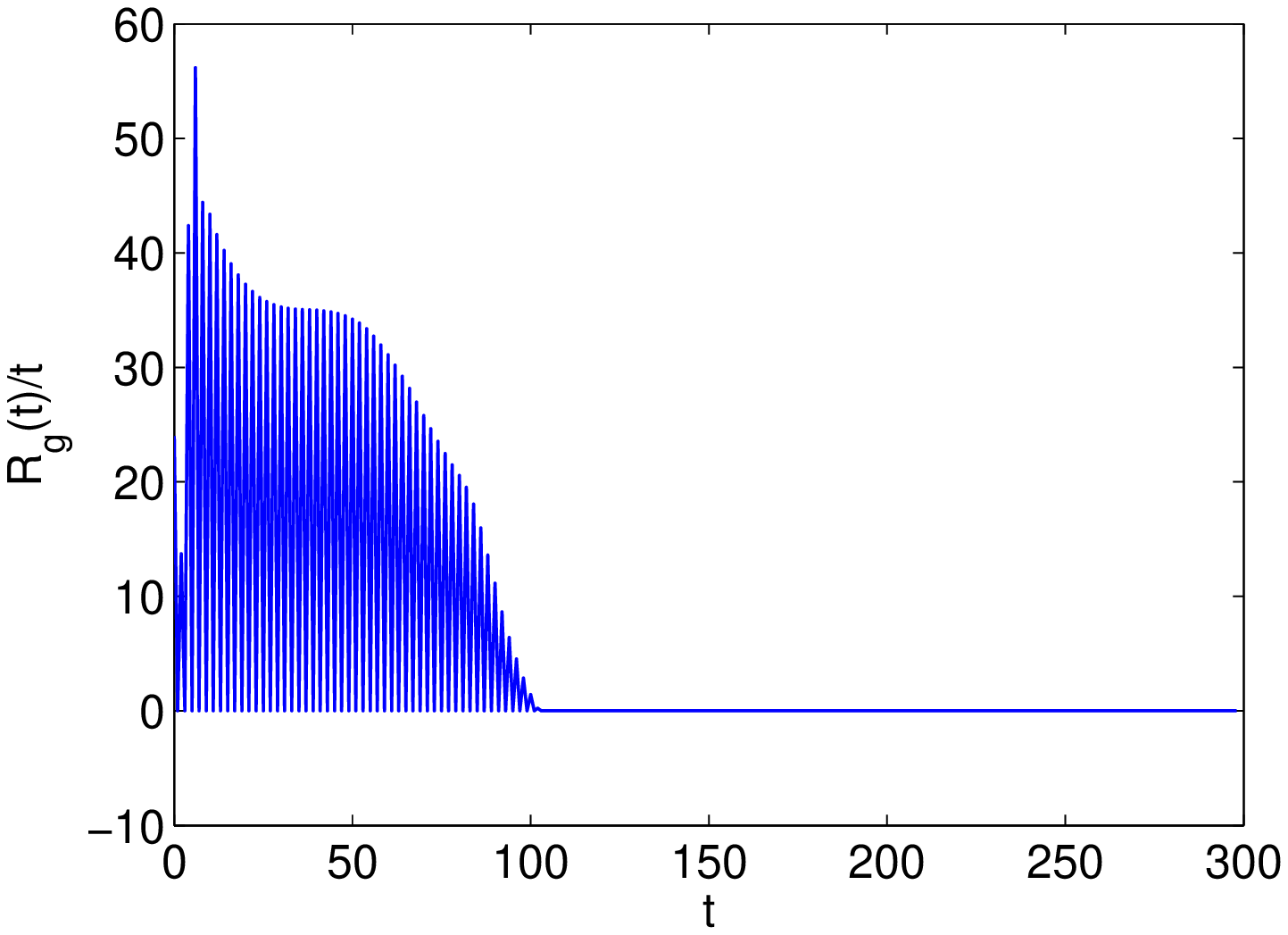}
\caption{The trajectory of ${\mathcal{R}_g}(t)/t$}\label{fig4}
\end{minipage}
\end{figure}

\section{Conclusions}\label{se4}

In this paper, an online distributed primal-dual algorithm has been presented for players to seek GNEs of non-cooperative games with dynamic cost functions, where all action sets are coupled through shared convex inequality constraints. By implementing the algorithm, each player makes decisions only using its own cost function, local set constraint, a local
block of the convex inequality constraint, and actions and estimates received
from its neighbors. The result shows that if the graph is connected, then the regrets, as well as the violation of inequality constraint, is bounded by product of a term depending on the deviation of variational GNE sequence and a sublinear function of learning time. A simulation example has been presented to demonstrate the effectiveness
of our theoretical results.
How to achieve lower bounds of the regrets and the violation is a difficult
problem in online non-cooperative games with shared inequality constraints. This topic will be considered in our future work.
 Our future work will also focus on some
other interesting topics, such as the case with time delays
and communication bandwidth constraints, which will bring new challenges in online distributed GNE seeking.

\section{appendix}
\subsection{Proof of Lemma \ref{le34}}\label{subse1}
Considering the norm of the third equation in (\ref{eq4}) and using the fact that $\sum_{j=1}^na_{ij}=\sum_{i=1}^na_{ij}=1$, we have

\[\begin{split}
&\sum_{i=1}^n\|y_i(t+1)\|^2\\
&\leq \sum_{i=1}^n\left\|\sum_{j=1}^na_{ij} ((1-\gamma^2(t)){y}_j(t)+\gamma(t)g_i(x_i(t)))\right\|^2\\
&\leq \sum_{i=1}^n\left(\sum_{j=1}^na_{ij}\left\| (1-\gamma^2(t)){y}_j(t)+\frac{\gamma^2(t)g_i(x_i(t))}{\gamma(t)}\right\|\right)^2\\
&\leq\sum_{i=1}^n\sum_{j=1}^na_{ij}\left((1-\gamma^2(t))\|{y}_j(t)\|^2+{\|g_i(x_i(t))\|^2}\right)\\
&\leq(1-\gamma^2(t))\sum_{i=1}^n \|{y}_i(t)\|^2+{n\kappa_2^2}
\end{split}\]
where the first inequality holds due to the non-expansive
property of projection, the second inequality results from triangle inequality, and the third inequality holds by using Jensen's inequality.
Because $ y_i(1)=0$, $\gamma^2(t)\leq1$ and $\gamma(t)\leq\gamma(t-1)$, it yields that
$\sum_{i=1}^n\|y_i(2)\|^2\leq\frac{n\kappa_2^2}{\gamma^2(1)}$; $\sum_{i=1}^n\|y_i(3)\|^2\leq (1-\gamma^2(2))\sum_{i=1}^n \|{y}_i(2)\|^2+{n\kappa_2^2}\leq\frac{n\kappa_2^2}{\gamma^2(2)}.$
In a similar fashion, we have $\sum_{i=1}^n\|y_i(t)\|^2\leq\frac{n\kappa_2^2}{\gamma^2(t-1)}$ for any $t\in\lfloor T\rfloor$. By the fact that $\gamma(t)$ is non-increasing, it leads to the validity of the result.\QEDA

\subsection{Proof of Lemma \ref{le5}}\label{subse9}
(i) By the first equation in (\ref{eq4}), we have
\begin{equation}\label{eq18}\begin{split}
e_{ih}(t+1)=&\sum_{k \in\mathcal{N}_i/\{h\}} a_{ik}{e}_{kh}(t)-(x_h(t+1)-x_h(t))
\end{split}\end{equation}
for any $i, h\in\mathcal{V}$. Let $\omega_h(t)=x_h(t)-x_h(t+1)$, by (\ref{eq18}), we have
\begin{equation}\label{eq20}\begin{split}
e_{h}(t+1)=(A_h^-\otimes I_m){e}_{h}(t)+\nu_h(t)~~~ h\in\mathcal{V}
\end{split}\end{equation}
where $\nu_h(t)=(\textbf{1}_{n-1}\otimes I_m)\omega_h(t)$.
By the second equation in (\ref{eq4}) and using the compactness of $\Omega_h$ in Assumption 2, we have
$\|\omega_h(t)\|\leq2\gamma(t)\kappa_1~~\emph{\emph{for any}}~h\in\mathcal{V}.$ By (\ref{eq20}), taking the norm of $e_{h}$ and replacing label $h$ with label $i$, we have
 \[\begin{split}
\|e_{i}(t+1)\|\leq \lambda\|{e}_{i}(t)\|+2\sqrt{n-1}\kappa_1\gamma(t)~~~ i\in\mathcal{V}
\end{split}\]
which immediately leads to the validity of (i).

(ii) By the facts that $x_{h}(1)\in\Omega_h$ and $x_{ih}(1)=0$ for any $i\neq h$, we know that $\|{e}_{i}(1)\|\leq \sqrt{n-1}\kappa_1$ for any $i\in\mathcal{V}$. Hence,
\[\begin{split}
\|e_{i}(t)\|^2&\leq \lambda^{t-1}\|{e}_{i}(1)\|^2+4(n-1)\kappa_1^2\sum_{k=0}^{t-2}\lambda^{k}\gamma(t-k-1)+4({n-1})\kappa_1^2\left(\sum_{k=0}^{t-2}\lambda^{k}\gamma(t-k-1)\right)^2\\
\end{split}\]
Using Cauchy-Schwarz inequality yields
\[\begin{split}
\left(\sum_{k=0}^{t-2}\lambda^{k}\gamma(t-k-1)\right)^2&\leq\left(\sum_{k=0}^{t-2}\lambda^{k}\right)\left(\sum_{k=0}^{t-2}\lambda^{k}(\gamma(t-k-1))^2\right)\\
&\leq\frac{\sum_{k=0}^{t-2}\lambda^{k}\gamma(t-k-1)}{1-\lambda}
\end{split}\]
where the second inequality results from the fact $\gamma(t)\leq1$. Above two inequalities lead to the validity of (ii).

(iii) Let $\theta_i(t)=\Big[(1-\gamma^2(t))\sum_{j\in {\mathcal{N}_i}}a_{ij}{y}_j(t)+\gamma(t)g_i(x_i(t))\Big]_+-\sum_{j\in {\mathcal{N}_i}}a_{ij}{y}_j(t)$, note that $\sum_{j\in {\mathcal{N}_i}}a_{ij}{y}_j(t)$ $\in\mathbb{R}^r_+$, thus,
\begin{equation}\label{t-31}\begin{split}
\|\theta_{i}(t)\|&\leq\left\|-\gamma(t)\sum_{j\in {\mathcal{N}_i}}a_{ij}\gamma(t){y}_j(t)+\gamma(t)g_i(x_i(t))\right\|\\
&\leq\gamma(t)\sum_{j\in {\mathcal{N}_i}}a_{ij}\left\|\gamma(t){y}_j(t)\right\|+\gamma(t)\left\|g_i(x_i(t))\right\|\\
&\leq({\sqrt{n}+1})\kappa_2\gamma(t)
\end{split}\end{equation}
where the first inequality results from the non-expansive property of projection, and the third inequality holds by using Lemma \ref{le34}. By the third equation in (\ref{eq4}), we have
\[\begin{split}
{y_i(t+1)} =\sum_{j\in {\mathcal{N}_i}}a_{ij}{y}_j(t)+\theta_i(t)~~~ i\in\mathcal{V}
\end{split}\]
Define $y=[y_1^T, \cdots, y_n^T]^T$ and $Y=y-(\textbf{1}_n\otimes I_r)\bar{y}$, we have
\begin{equation}\label{eq-31}
{Y}(t+1)=(A\otimes I_r){{Y}}(t)+\theta(t)
\end{equation}
where $\theta(t)=\left(\left(I-\frac{\textbf{1}\textbf{1}^T}{n}\right)\otimes I_r\right)[\theta_1(t), \cdots, \theta_n(t)]^T$.  Since $A$ is symmetric for $\mathcal{G}(A)$ being undirected, there must exist an orthogonal matrix $Q=\left[\frac{\textbf{1}_n}{\sqrt{n}}, Q_0\right]$ such that $Q^TLQ=\diag(1, \lambda_2(A), \cdots, \lambda_n(A))$. Define $\tilde{Y}(t)= (Q^T\otimes I_r)Y(t)$, it is obvious that $\tilde{Y}(0)=0$. By (\ref{eq-31}), it follows
\begin{equation*}\label{eq-31a}
 \tilde{Y}(t+1)=\diag(1, \lambda_2(A)I_r, \cdots, \lambda_n(A) I_r)\tilde{Y}(t)+(Q^T\otimes I_r)\theta(t).
\end{equation*}
Then, together with the facts that $ \Vert Q \Vert=1$ and $\Vert I-\textbf{1}\textbf{1}^T/n\Vert\leq2$, we have
\begin{equation}\label{T-33}\begin{split}
\|y-(\textbf{1}_n\otimes I_r)\bar{y}\|&= \Vert Q\tilde{Y}(t) \Vert\\
&\leq \Vert \tilde{Y}(t) \Vert\\
&\leq \sum_{k=0}^{t-2}(\sigma_2(A))^k\|\theta(t-1-k)\|
         \end{split}\end{equation}
By (\ref{t-31}), we have $\|\theta(t-1-k)\|\leq2(n+{\sqrt{n}})\kappa_2\gamma(t-1-k)$. Submitting it into (\ref{T-33}) yields inequality in (iii).

(iv) Similar to the proof of (ii), using Cauchy-Schwarz inequality, the validity of (iv) can be verified.\QEDA

\subsection{Proof of Lemma \ref{le2}}\label{subse2}
Note that by algorithm (\ref{eq4}), ${y}_i(t)$ and ${x}_i(t)$ will stay in $\mathbb{R}^r_+$ and $\in{\Omega_i}$ all the time, respectively. Note that
\begin{equation}\label{eq5}\begin{split}
&\sum_{i=1}^n\|x_i(t+1)-x_i^*(t+1)\|^2\\
&=\sum_{i=1}^n\|x_i(t+1)-x_i^*(t)\|^2+\sum_{i=1}^n\langle x_i^*(t+1)+x_i^*(t)\\
&~~~-2x_i(t+1),  x_i^*(t+1)-x_i^*(t)   \rangle\\
&\leq \sum_{i=1}^n\|x_i(t+1)-x_i^*(t)\|^2+4\kappa_1 \sum_{i=1}^n \|x_i^*(t+1)-x_i^*(t) \|.  \\
\end{split}\end{equation}
Moreover, by the second equation in algorithm (\ref{eq4}), we have
\begin{equation}\label{buc0}\begin{split}
\sum_{i=1}^n\|x_i(t+1)-x_i^*(t)\|^2&= \sum_{i=1}^n\Big\|(1-\gamma(t))(x_i(t)-x_i^*(t))+\gamma(t)\Big(P_{\Omega_i}[ {x}_i(t)\\
&~~~-\gamma(t)(\nabla_{x_i}J_i^t(\textbf{x}_i(t))+\gamma(t)\nabla_{x_i}g_i(x_i(t))y_i(t))]-x_i^*(t)\Big)\Big\|^2\\
\end{split}\end{equation}
where $\nabla_{x_i}g_i(x_i(t))=[\nabla_{x_i}g_{i1}(x_i(t)), \cdots, $ $\nabla_{x_i}g_{ir}(x_i(t))]$.
By (\ref{eq4jjk}), we have $x_i^*(t)={P_{{\Omega_i}}}[{x_i^*(t)}- \gamma(t)(\nabla_{x_i}J_i^t(x^*(t))+ \gamma(t)\nabla_{x_i}g_i(x_i^*(t))y^*(t))]$. Then, from (\ref{buc0}), it implies that
\begin{equation}\label{buc1}\begin{split}
\sum_{i=1}^n\|x_i(t+1)-x_i^*(t)\|^2&= \sum_{i=1}^n\Big\|(1-\gamma(t))(x_i(t)-x_i^*(t))+\gamma(t)\Big(P_{\Omega_i}[ {x}_i(t)\\
&~~~-\gamma(t)(\nabla_{x_i}J_i^t(\textbf{x}_i(t))+\gamma(t)\nabla_{x_i}g_i(x_i(t))y_i(t))]-{P_{{\Omega_i}}}[{x_i^*(t)} \\
  &~~~- \gamma(t)(\nabla_{x_i}J_i^t(x^*(t))+ \gamma(t)\nabla_{x_i}g_i(x_i^*(t))y^*(t))]\Big)\Big\|^2.\\
\end{split}\end{equation}
By Jensen's inequality, we know that $\|\gamma u+(1-\gamma) v\|^2\leq \gamma\|x\|^2+(1-\gamma) \|y\|^2$ for any $0\leq\gamma\leq1$ and any vectors $u, v\in \mathbb{R}^m$. From (\ref{buc1}), it follows that
\begin{equation}\label{buc2}\begin{split}
\sum_{i=1}^n\|x_i(t+1)-x_i^*(t)\|^2&\leq (1-\gamma(t))\sum_{i=1}^n\Big\|(x_i(t)-x_i^*(t))\Big\|^2+\gamma(t)\sum_{i=1}^n\Big\|P_{\Omega_i}[ {x}_i(t)\\
&~~~-\gamma(t)(\nabla_{x_i}J_i^t(\textbf{x}_i(t))+\gamma(t)\nabla_{x_i}g_i(x_i(t))y_i(t))]-{P_{{\Omega_i}}}[{x_i^*(t)} \\
  &~~~- \gamma(t)(\nabla_{x_i}J_i^t(x^*(t))+ \gamma(t)\nabla_{x_i}g_i(x_i^*(t))y^*(t))]\Big\|^2.\\
\end{split}\end{equation}
Using the non-expansive property of projection, one knows that $\|{P_{{\Omega_i}}}[u]-{P_{{\Omega_i}}}[v]\|\leq\|u-v\|$. Hence,
\begin{equation}\label{buc3}\begin{split}
&\Big\|P_{\Omega_i}[ {x}_i(t)-\gamma(t)(\nabla_{x_i}J_i^t(\textbf{x}_i(t))+\gamma(t)\nabla_{x_i}g_i(x_i(t))y_i(t))]\\
&-{P_{{\Omega_i}}}[{x_i^*(t)}- \gamma(t)(\nabla_{x_i}J_i^t(x^*(t))+ \gamma(t)\nabla_{x_i}g_i(x_i^*(t))y^*(t))]\Big\|^2\\
&\leq\Big\|({x}_i(t)-x_i^*(t))-\gamma(t)\left(\nabla_{x_i}J_i^t(\textbf{x}_i(t))-\nabla_{x_i}J_i^t(x^*(t))\right)
-\gamma^2(t)\left(\nabla_{x_i}g_i(x_i(t))y_i(t)-\nabla_{x_i}g_i(x_i^*(t))y^*(t)\right)\Big\|^2\\
&=\|{x}_i(t)-x_i^*(t)\|^2+\gamma^2(t)\|\nabla_{x_i}J_i^t(\textbf{x}_i(t))-\nabla_{x_i}J_i^t(x^*(t))\|^2+
\gamma^4(t)\|\nabla_{x_i}g_i(x_i(t))y_i(t)-\nabla_{x_i}g_i(x_i^*(t))y^*(t)\|^2\\
&~~~-2\gamma(t)\langle {x}_i(t)-x_i^*(t), \nabla_{x_i}J_i^t(\textbf{x}_i(t))-\nabla_{x_i}J_i^t(x^*(t))\rangle\\
&~~~-2\gamma^2(t)\langle {x}_i(t)-x_i^*(t), \nabla_{x_i}g_i(x_i(t))y_i(t)-\nabla_{x_i}g_i(x_i^*(t))y^*(t)\rangle\\
&~~~+2\gamma^3(t)\langle \nabla_{x_i}J_i^t(\textbf{x}_i(t))-\nabla_{x_i}J_i^t(x^*(t)), \nabla_{x_i}g_i(x_i(t))y_i(t)-\nabla_{x_i}g_i(x_i^*(t))y^*(t)\rangle.\\
\end{split}\end{equation}
By the fact that $\|u+v\|^2\leq 2(\|u\|^2+\|v\|^2)$, we have
\[\label{eqlulu5}\begin{split}
&\gamma^4(t)\|\nabla_{x_i}g_i(x_i(t))y_i(t)-\nabla_{x_i}g_i(x_i^*(t))y^*(t)\|^2\\
&\leq 2\gamma^4(t)\|\nabla_{x_i}g_i(x_i(t))y_i(t)\|^2+ 2\gamma^4(t)\|\nabla_{x_i}g_i(x_i^*(t))y^*(t)\|^2\\
&\leq 2\gamma^4(t)\|\nabla_{x_i}g_i(x_i(t))y_i(t)\|^2+ 2\gamma^2(t)\|\nabla_{x_i}g_i(x_i^*(t))y^*(t)\|^2\\
&\leq 2\gamma^2\|\nabla_{x_i}g_i(x_i(t))\|^2\|\gamma(t)y_i(t)\|^2+ 2\gamma^2(t)\|\nabla_{x_i}g_i(x_i^*(t))\|\|y^*(t)\|^2\\
&\leq 2\gamma^2(t)\kappa_0^2\|\gamma(t)y_i(t)\|^2+ 2\theta^2\kappa_0^2\gamma^2(t)\\
\end{split}\]
where the second inequality results from the fact that $0\leq\gamma(t)\leq1$. Note that
\[\label{eqlulu5}\begin{split}
&2\gamma^3(t)\langle \nabla_{x_i}J_i^t(\textbf{x}_i(t))-\nabla_{x_i}J_i^t(x^*(t)), \nabla_{x_i}g_i(x_i(t))y_i(t)-\nabla_{x_i}g_i(x_i^*(t))y^*(t)\rangle\\
&\leq2\gamma^2(t)\|\nabla_{x_i}J_i^t(\textbf{x}_i(t))-\nabla_{x_i}J_i^t(x^*(t))\|\Big(\|\nabla_{x_i}g_i(x_i(t))\|\|\gamma(t)y_i(t)\|
+\gamma(t)\|\nabla_{x_i}g_i(x_i^*(t))\|y^*(t)\|\Big)\\
&\leq2\gamma^2(t)\|\nabla_{x_i}J_i^t(\textbf{x}_i(t))-\nabla_{x_i}J_i^t(x^*(t))\|\Big(\kappa_0\theta
+\kappa_0\|y^*(t)\|\Big)\\
&=2\kappa_0\gamma^2(t)\|\left(\nabla_{x_i}J_i^t(\textbf{x}_i(t))-\nabla_{x_i}J_i^t({x}(t))\right)
+\left(\nabla_{x_i}J_i^t({x}(t))-\nabla_{x_i}J_i^t(x^*(t))\right)\|\Big(\|\gamma(t)y_i(t)\|
+\theta\Big)\\
&\leq2\kappa_0\gamma^2(t)(\|\gamma(t)y_i(t)\|
+\theta)\|\nabla_{x_i}J_i^t(\textbf{x}_i(t))-\nabla_{x_i}J_i^t({x}(t))\|+\\
&~~~2\kappa_0\gamma^2(t)\Big(\|\nabla_{x_i}J_i^t({x}(t))\|+\|\nabla_{x_i}J_i^t(x^*(t))\|\Big)\Big(\|\gamma(t)y_i(t)\|
+\theta\Big)\\
&\leq2\kappa_0\gamma^2(t)\Big(\|\gamma(t)y_i(t)\|
+\theta\Big)\|\nabla_{x_i}J_i^t(\textbf{x}_i(t))-\nabla_{x_i}J_i^t({x}(t))\|+4\kappa_0\kappa_3\gamma^2(t)\Big(\|\gamma(t)y_i(t)\|
+\theta\Big)\\
&\leq\kappa_0\gamma^2(t)(\|\gamma(t)y_i(t)\|
+\theta)^2+\kappa_0\gamma^2(t)\|\nabla_{x_i}J_i^t(\textbf{x}_i(t))-\nabla_{x_i}J_i^t({x}(t))\|^2+4\kappa_0\kappa_3\gamma^2(t)\Big(\|\gamma(t)y_i(t)\|
+\theta\Big)\\
\end{split}\]
where the last inequality holds by using the fact that $2ab\leq a^2+b^2$ for any $a, b\in\mathbb{R}$. Due to the facts that ${x}(t)=(x_i(t), x_{-i}(t))$ and ${x}(t), x^*(t)\in\Omega$, there holds $\|\nabla_{x_i}J_i^t({x}(t))\|\leq\kappa_3$ and $\|\nabla_{x_i}J_i^t(x^*(t))\|\leq\kappa_3$. Then, we have
\[\label{eqlulu5}\begin{split}
&\|\nabla_{x_i}J_i^t(\textbf{x}_i(t))-\nabla_{x_i}J_i^t(x^*(t))\|^2\\
&=\|\left(\nabla_{x_i}J_i^t(\textbf{x}_i(t))-\nabla_{x_i}J_i^t({x}(t))\right)
+\left(\nabla_{x_i}J_i^t({x}(t))-\nabla_{x_i}J_i^t(x^*(t))\right)\|^2\\
&\leq\|\nabla_{x_i}J_i^t(\textbf{x}_i(t))-\nabla_{x_i}J_i^t({x}(t))\|^2+\|\nabla_{x_i}J_i^t({x}(t))-\nabla_{x_i}J_i^t(x^*(t))\|^2\\
&~~~+2\|\nabla_{x_i}J_i^t(\textbf{x}_i(t))-\nabla_{x_i}J_i^t({x}(t))\|\|\nabla_{x_i}J_i^t({x}(t))-\nabla_{x_i}J_i^t(x^*(t))\|\\
&\leq\|\nabla_{x_i}J_i^t(\textbf{x}_i(t))-\nabla_{x_i}J_i^t({x}(t))\|^2+2\|\nabla_{x_i}J_i^t({x}(t))\|^2+2\|\nabla_{x_i}J_i^t(x^*(t))\|^2\\
&~~~+2\|\nabla_{x_i}J_i^t(\textbf{x}_i(t))-\nabla_{x_i}J_i^t({x}(t))\|(\|\nabla_{x_i}J_i^t({x}(t))\|+\|\nabla_{x_i}J_i^t(x^*(t))\|)\\
&\leq\|\nabla_{x_i}J_i^t(\textbf{x}_i(t))-\nabla_{x_i}J_i^t({x}(t))\|^2+4\kappa_3\|\nabla_{x_i}J_i^t(\textbf{x}_i(t))
-\nabla_{x_i}J_i^t({x}(t))\|+4\kappa_3^2\\
\end{split}\]
and
\[\label{eqlulu5}\begin{split}
&-\langle {x}_i(t)-x_i^*(t), \nabla_{x_i}J_i^t(\textbf{x}_i(t))-\nabla_{x_i}J_i^t(x^*(t))\rangle\\
&=-\langle {x}_i(t)-x_i^*(t), \nabla_{x_i}J_i^t({x}(t))-\nabla_{x_i}J_i^t(x^*(t))\rangle-\langle {x}_i(t)-x_i^*(t), \nabla_{x_i}J_i^t(\textbf{x}_i(t))-\nabla_{x_i}J_i^t(x(t))\rangle\\
&\leq-\langle {x}_i(t)-x_i^*(t), \nabla_{x_i}J_i^t({x}(t))-\nabla_{x_i}J_i^t(x^*(t))\rangle+\|{x}_i(t)-x_i^*(t)\| \|\nabla_{x_i}J_i^t(\textbf{x}_i(t))-\nabla_{x_i}J_i^t(x(t))\|\\
&\leq-\langle {x}_i(t)-x_i^*(t), \nabla_{x_i}J_i^t({x}(t))-\nabla_{x_i}J_i^t(x^*(t))\rangle+(\|{x}_i(t)\|+\|x_i^*(t)\|)\|\nabla_{x_i}J_i^t(\textbf{x}_i(t))-\nabla_{x_i}J_i^t(x(t))\|\\
&\leq-\langle {x}_i(t)-x_i^*(t), \nabla_{x_i}J_i^t({x}(t))-\nabla_{x_i}J_i^t(x^*(t))\rangle+
2\kappa_1\|\nabla_{x_i}J_i^t(\textbf{x}_i(t))-\nabla_{x_i}J_i^t(x(t))\|.\\
\end{split}\]
Then, by inequality (\ref{buc3}), it implies that
\begin{equation}\label{buc4}\begin{split}
&\Big\|P_{\Omega_i}[ {x}_i(t)-\gamma(t)(\nabla_{x_i}J_i^t(\textbf{x}_i(t))+\gamma(t)\nabla_{x_i}g_i(x_i(t))y_i(t))]-\\
&{P_{{\Omega_i}}}[{x_i^*(t)}- \gamma(t)(\nabla_{x_i}J_i^t(x^*(t))+ \gamma(t)\nabla_{x_i}g_i(x_i^*(t))y^*(t))]\Big\|^2\\
&\leq\|{x}_i(t)-x_i^*(t)\|^2+\gamma^2(t)\Big(\|\nabla_{x_i}J_i^t(\textbf{x}_i(t))-\nabla_{x_i}J_i^t({x}(t))\|^2\\
&~~~+4\kappa_3\|\nabla_{x_i}J_i^t(\textbf{x}_i(t))
-\nabla_{x_i}J_i^t({x}(t))\|+4\kappa_3^2\Big)\\
&~~~+2\gamma^2(t)\kappa_0^2\|\gamma(t)y_i(t)\|^2+ 2\theta^2\kappa_0^2\gamma^2(t)\\
&~~~-2\gamma(t)\Big(\langle {x}_i(t)-x_i^*(t), \nabla_{x_i}J_i^t({x}(t))-\nabla_{x_i}J_i^t(x^*(t))\rangle+
2\kappa_1\|\nabla_{x_i}J_i^t(\textbf{x}_i(t))-\nabla_{x_i}J_i^t(x(t))\|\Big)\\
&~~~+\kappa_0\gamma^2(t)(\|\gamma(t)y_i(t)\|
+\theta)^2+\kappa_0\gamma^2(t)\|\nabla_{x_i}J_i^t(\textbf{x}_i(t))-\nabla_{x_i}J_i^t({x}(t))\|^2\\
&~~~+4\kappa_0\kappa_3\gamma^2(t)\Big(\|\gamma(t)y_i(t)\|
+\theta\Big)-2\gamma^2(t)\langle {x}_i(t)-x_i^*(t), \nabla_{x_i}g_i(x_i(t))y_i(t)-\nabla_{x_i}g_i(x_i^*(t))y^*(t)\rangle\\
&=\|{x}_i(t)-x_i^*(t)\|^2+\gamma^2(t)\Big((1+\kappa_0)\|\nabla_{x_i}J_i^t(\textbf{x}_i(t))-\nabla_{x_i}J_i^t({x}(t))\|^2\\
&~~~+4(\kappa_1+\kappa_3)\|\nabla_{x_i}J_i^t(\textbf{x}_i(t))
-\nabla_{x_i}J_i^t({x}(t))\|\Big)\\
&~~~+2\gamma^2(t)\kappa_0^2\|\gamma(t)y_i(t)\|^2+ \Big(2\theta^2\kappa_0^2+4\kappa_3^2+4\kappa_0\kappa_3\theta\Big)\gamma^2(t)\\
&~~~-2\gamma(t)\Big\langle {x}_i(t)-x_i^*(t), \nabla_{x_i}J_i^t({x}(t))-\nabla_{x_i}J_i^t(x^*(t))\Big\rangle\\
&~~~+\kappa_0\gamma^2(t)(\|\gamma(t)y_i(t)\|
+\theta)^2\\
&~~~+4\kappa_0\kappa_3\gamma^2(t)\|\gamma(t)y_i(t)\|-2\gamma^2(t)\langle {x}_i(t)-x_i^*(t), \nabla_{x_i}g_i(x_i(t))y_i(t)-\nabla_{x_i}g_i(x_i^*(t))y^*(t)\rangle.\\
\end{split}\end{equation}
It is noticed that
\begin{equation}\label{buc5}\begin{split}
& -\langle {x}_i(t)-x_i^*(t), \nabla_{x_i}g_i(x_i(t))y_i(t)-\nabla_{x_i}g_i(x_i^*(t))y^*(t)\rangle\\
&=-\langle {x}_i(t)-x_i^*(t), \nabla_{x_i}g_i(x_i(t))y_i(t)\rangle+\langle {x}_i(t)-x_i^*(t), \nabla_{x_i}g_i(x_i^*(t))y^*(t)\rangle\\
&\leq-\langle {x}_i(t)-x_i^*(t), \nabla_{x_i}g_i(x_i(t))y_i(t)\rangle+\|{x}_i(t)-x_i^*(t)\| \|\nabla_{x_i}g_i(x_i^*(t))\|y^*(t)\|\\
&\leq-\langle {x}_i(t)-x_i^*(t), \nabla_{x_i}g_i(x_i(t))y_i(t)\rangle+2\theta\kappa_1\kappa_3.
\end{split}\end{equation}
Based on (\ref{buc4}) and (\ref{buc5}), it follows that
\begin{equation}\label{buc6}\begin{split}
&\Big\|P_{\Omega_i}[ {x}_i(t)-\gamma(t)(\nabla_{x_i}J_i^t(\textbf{x}_i(t))+\gamma(t)\nabla_{x_i}g_i(x_i(t))y_i(t))]\\
&-{P_{{\Omega_i}}}[{x_i^*(t)}- \gamma(t)(\nabla_{x_i}J_i^t(x^*(t))+ \gamma(t)\nabla_{x_i}g_i(x_i^*(t))y^*(t))]\Big\|^2\\
&\leq\|{x}_i(t)-x_i^*(t)\|^2+\gamma^2(t)\Big((1+\kappa_0)\|\nabla_{x_i}J_i^t(\textbf{x}_i(t))-\nabla_{x_i}J_i^t({x}(t))\|^2\\
&~~~+4(\kappa_1+\kappa_3)\|\nabla_{x_i}J_i^t(\textbf{x}_i(t))
-\nabla_{x_i}J_i^t({x}(t))\|\Big)\\
&~~~+2\gamma^2(t)\kappa_0^2\|\gamma(t)y_i(t)\|^2+ \Big(2\theta^2\kappa_0^2+4\kappa_3^2+4\kappa_0\kappa_3\theta+4\theta\kappa_1\kappa_3\Big)\gamma^2(t)\\
&~~~-2\gamma(t)\Big\langle {x}_i(t)-x_i^*(t), \nabla_{x_i}J_i^t({x}(t))-\nabla_{x_i}J_i^t(x^*(t))\Big\rangle\\
&~~~+\kappa_0\gamma^2(t)(\|\gamma(t)y_i(t)\|
+\theta)^2+4\kappa_0\kappa_3\gamma^2(t)\|\gamma(t)y_i(t)\|\\
&~~~-2\gamma^2(t)\Big\langle {x}_i(t)-x_i^*(t), \nabla_{x_i}g_i(x_i(t))y_i(t)\Big\rangle.\\
\end{split}\end{equation}
It is not difficult to verify that $\tau=4n\kappa_0\theta(\theta\kappa_0+\kappa_1+\kappa_3)+4n\kappa_3^2\geq2\theta^2\kappa_0^2+4\kappa_3^2+4\kappa_0\kappa_3\theta+4\theta\kappa_1\kappa_3$,
submitting (\ref{buc6}) into (\ref{buc2}) and using the fact that $\sum_{i=1}^{n}\Big\langle {x}_i(t)-x_i^*(t), \nabla_{x_i}J_i^t({x}(t))-\nabla_{x_i}J_i^t(x^*(t))\Big\rangle=\Big\langle {x}(t)-x^*(t), F^t({x}(t))-F^t(x^*(t))\Big\rangle$, we have
\begin{equation}\label{buc7}\begin{split}
\sum_{i=1}^n\|x_i(t+1)-x_i^*(t)\|^2&\leq\sum_{i=1}^n\|x_i(t)-x_i^*(t)\|^2+2\gamma^3(t)\kappa_0^2\sum_{i=1}^n\|\gamma(t)y_i(t)\|^2\\
 &~~~+\kappa_0\gamma^3(t)\sum_{i=1}^n(\theta+\|\gamma(t)y_i(t)\|)^2+\tau\gamma^3(t)\\
  &~~~-2\gamma^2(t)\langle x(t)-x^*(t), F^t(x(t))- F^t(x^*(t))\rangle\\
   &~~~+ (1+\kappa_0)\gamma^2(t)\sum_{i=1}^n\|\nabla_{x_i}J_i^t(\textbf{x}_i(t))-\nabla_{x_i}J_i^t({x}(t))\|^2\\
  &~~~+4(\kappa_1+\kappa_3)\gamma^2(t)\sum_{i=1}^n\|\nabla_{x_i}J_i^t(\textbf{x}_i(t))-\nabla_{x_i}J_i^t({x}(t))\|\\
&~~~-2\gamma^3(t)\sum_{i=1}^n\langle x_i(t)-x_i^*(t), \nabla_{x_i}g_i(x_i(t))y_i(t)\rangle+4\kappa_0\kappa_3\gamma^3(t)\sum_{i=1}^n\|\gamma(t)y_i(t)\|.
\end{split}\end{equation}
By condition (i) in Assumption 2, we have $$\langle x(t)-x^*(t), F^t(x(t))- F^t(x^*(t))\rangle\geq\mu\|x(t)-x^*(t)\|^2.$$  By condition (ii) in Assumption 2, it yields that
 \[\label{eq6}\begin{split}
\sum_{i=1}^n\|\nabla_{x_i}J_i^t(\textbf{x}_i(t))-\nabla_{x_i}J_i^t({x}(t))\|&\leq\ell\sum_{i=1}^n\|\textbf{x}_{-i}(t)-{x}_{-i}(t)\|\\
&=\ell\sqrt{\left(\sum_{i=1}^n\|\textbf{x}_{-i}(t)-{x}_{-i}(t)\|\right)^2}\\
&\leq\ell\sqrt{n\sum_{i=1}^n\|\textbf{x}_{-i}(t)-{x}_{-i}(t)\|^2}\\
\end{split}\]
\[\label{eq6}\begin{split}
&=\ell\sqrt{(n-1)\sum_{i=1}^n\|e_{i}(t)\|^2}\\
&\leq \ell\sqrt{n-1}\sum_{i=1}^n\|e_i(t)\|\\
\end{split}\]
and $$\sum_{i=1}^n\|\nabla_{x_i}J_i^t(\textbf{x}_i(t))-\nabla_{x_i}J_i^t({x}(t))\|^2\leq \ell^2\sum_{i=1}^n\|e_i(t)\|^2.$$
By Lemma 1, one has $\|\gamma(t)y_i(t)\|\leq{\sqrt{n}\kappa_2}.$ Then, from (\ref{buc7}), it follows that
\[\label{eqlulu5}\begin{split}
\sum_{i=1}^n\|x_i(t+1)-x_i^*(t)\|^2&\leq\sum_{i=1}^n\|x_i(t)-x_i^*(t)\|^2+(\tau+2\kappa_0^2{n}^2\kappa_2^2
 +4\kappa_0\kappa_3n\sqrt{n}\kappa_2+\kappa_0n(\theta+\sqrt{n}\kappa_2)^2)\gamma^3(t)\\
  &~~~-2\mu\gamma^2(t)\|x(t)-x^*(t)\|^2+ (1+\kappa_0)\ell^2\gamma^2(t)\sum_{i=1}^n\|e_i(t)\|^2\\
  &~~~+4\sqrt{n-1}(\kappa_1+\kappa_3)\ell\gamma^2(t)\sum_{i=1}^n\|e_i(t)\|-2\gamma^3(t)\sum_{i=1}^n\langle x_i(t)-x_i^*(t), \nabla_{x_i}g_i(x_i(t))y_i(t)\rangle.
\end{split}\]
By definitions of $\pi_i, i=1, 2$, we have
$$2\pi_1=\tau+2n^2(\kappa_0^2+\kappa_0)\kappa_2^2+4\kappa_0\kappa_3\kappa_2n\sqrt{n}+2n\kappa_0\theta^2
\geq\tau+2\kappa_0^2{n}^2\kappa_2^2
 +4\kappa_0\kappa_2\kappa_3n\sqrt{n}+\kappa_0n(\theta+\sqrt{n}\kappa_2)^2$$ and $2\pi_2=4\sqrt{n-1}\ell(\kappa_1+\kappa_3)$. Thus, 
 \begin{equation}\label{eq6}\begin{split}
& \sum_{i=1}^n\|x_i(t+1)-x_i^*(t)\|^2 \\
&\leq\sum_{i=1}^n\|x_i(t)-x_i^*(t)\|^2+2\pi_1\gamma^3(t)-2\gamma^2(t)\mu\|x(t)\\
 &~~~-x^*(t)\|^2+ (1+\kappa_0)\ell^2\gamma^2(t)\sum_{i=1}^n\|e_i(t)\|^2\\
&~~~+2\pi_2\gamma^2(t)\sum_{i=1}^n\|e_i(t)\|-2\gamma^3(t)\sum_{i=1}^n\langle x_i(t)\\
&~~~-x_i^*(t), \nabla_{x_i}g_i(x_i(t))y_i(t)\rangle.\\
\end{split}\end{equation}
 The convexity of $g_i(x_i(t))y_i(t)$ with respect to $x_i$ implies that
\begin{equation}\label{eq8}\begin{split}
&\sum_{i=1}^n\langle x_i(t)-x_i^*, \nabla_{x_i}g_i(x_i(t))y_i(t)\rangle\\
&\geq\sum_{i=1}^n (g_i(x_i(t)))^Ty_i(t)-\sum_{i=1}^n(g_i(x_i^*(t)))^T(y_i(t)\\
&~~~-\bar{y}(t))-\sum_{i=1}^n(g_i(x_i^*(t)))^T\bar{y}(t)\\
&\geq\sum_{i=1}^n (g_i(x_i(t)))^Ty_i(t)-\kappa_2\sum_{i=1}^n\|y_i(t)-\bar{y}(t)\|
\end{split}\end{equation}
where the second inequality holds due to the facts that $\sum_{i=1}^ng_i(x_i^*)\leq0$ and $\bar{y}(t)\geq0$.
Submitting (\ref{eq6}) and (\ref{eq8}) into (\ref{eq5}), and taking the summation with respect to $t = 1, \cdots, T$, we have
\[\begin{split}
&\sum_{t=1}^T\|x(t)-x^*(t)\|^2\\
&\leq\sum_{t=1}^T\frac{1}{2\mu\gamma^2(t)}\Big(\sum_{i=1}^n\|x_i(t)-x_i^*(t)\|^2-\sum_{i=1}^n\|x_i(t+1)\\
&~~~-x_i^*(t+1)\|^2\Big)+\frac{\pi_1}{\mu}\sum_{t=1}^T\gamma(t)+ \frac{(1+\kappa_0)\ell^2}{2\mu}\sum_{t=1}^T\sum_{i=1}^n\|e_i(t)\|^2\\
&~~~+\frac{\pi_2}{\mu}\sum_{t=1}^T\sum_{i=1}^n\|e_i(t)\|-\frac{1}{\mu}\sum_{t=1}^T\sum_{i=1}^n \gamma(t)(g_i(x_i(t)))^Ty_i(t)\\
&~~~+\frac{2\kappa_1}{\mu\gamma^2(T)} \sum_{t=1}^T\sum_{i=1}^n \|x_i^*(t+1)-x_i^*(t) \|+\frac{\kappa_2}{\mu}\sum_{i=1}^n\|y_i(t)-\bar{y}(t)\|.
\end{split}\]
Note that $\|x_i(t)-x_i^*(t)\|^2\leq4\kappa_1^2$ for any $i\in\mathcal{V}$, therefore,
\[\begin{split}
&\sum_{t=1}^T\frac{1}{2\gamma^2(t)}\Big(\sum_{i=1}^n\|x_i(t)-x_i^*(t)\|^2-\sum_{i=1}^n\|x_i(t+1)-x_i^*(t+1)\|^2\Big)\\
&=\frac{1}{2\gamma^2(1)}\sum_{i=1}^n\|x_i(1)-x_i^*(1)\|^2-\frac{1}{2\gamma^2(T)}\sum_{i=1}^n\|x_i(T+1)\\
&~~~-x_i^*(T+1)\|^2+\frac{1}{2}\sum_{t=2}^T\Big(\frac{1}{\gamma^2(t)}-\frac{1}{\gamma^2(t-1)}\Big)\sum_{i=1}^n\|x_i(t)-x_i^*(t)\|^2\\
&\leq\frac{2n\kappa_1^2}{\gamma^2(T)}.
\end{split}\]
 Using above two inequalities and the fact that $\sum_{t=1}^T\sum_{i=1}^n \|x_i^*(t+1)-x_i^*(t) \|\leq\sqrt{n}\Theta_T$ leads to the validity of inequality (\ref{eq9}).\QEDA

\subsection{Proof of Lemma \ref{le3}}\label{subse1}
By the third equation in (\ref{eq4}), the following recursion for Lagrange multipliers holds
 \begin{equation}\label{eq10}\begin{split}
&\sum_{i=1}^n\|y_i(t+1)-y\|^2\\
 &= \sum_{i=1}^n\left\|\left[(1-\gamma^2(t))\sum_{j\in {\mathcal{N}_i}}a_{ij}{y}_j(t)+\gamma(t)g_i(x_i(t))\right]_+-y\right\|^2\\
&\leq\sum_{i=1}^n\Big\|(1-\gamma^2(t))\sum_{j\in {\mathcal{N}_i}}a_{ij}({y}_j(t)-{y}_i(t))+({y}_i(t)-y)+\gamma(t)(g_i(x_i(t))-\gamma(t){y}_i(t))\Big\|^2\\
&\leq\sum_{i=1}^n\Big(\sum_{j\in {\mathcal{N}_i}}a_{ij}\|{y}_j(t)-{y}_i(t)\|^2+\|{y}_i(t)-y\|^2\\
&~~~+\gamma^2(t)\|g_i(x_i(t))-\gamma(t){y}_i(t)\|^2+2(1-\gamma^2(t))\sum_{j\in {\mathcal{N}_i}}a_{ij}\langle {y}_j(t)-{y}_i(t), {y}_i(t)\\
&~~~-y\rangle+2\gamma(t)\sum_{j\in {\mathcal{N}_i}}a_{ij}\|{y}_j(t)-{y}_i(t)\|\|g_i(x_i(t))-\gamma(t){y}_i(t)\|+2\gamma(t)\langle{y}_i(t)-y, g_i(x_i(t))-\gamma(t){y}_i(t)\rangle\Big)\\
\end{split}\end{equation}
where the first inequality results from the non-expansive
property of projection, and the second inequality holds by using Jensen's inequality and the fact that $\gamma(t)<1$. Moreover,
\begin{equation}\label{eq11}\begin{split}
&\sum_{i=1}^n\sum_{j=1}^na_{ij}\langle {y}_j(t)-{y}_i(t), {y}_i(t)-y\rangle\\
&=\sum_{i=1}^n\sum_{j=1}^na_{ij}\langle{y}_j(t)-{y}_i(t), {y}_i(t)-\bar{y}(t)\rangle\\
&\leq\frac{1}{2}\sum_{i=1}^n\sum_{j=1}^na_{ij}( \|y_j(t)-{y}_i(t)\|^2+\| {y}_i(t)-\bar{y}(t)\|^2)\\
\end{split}\end{equation}
where the equation holds due to the fact that $\sum_{i=1}^n\sum_{j=1}^n a_{ij}\langle {y}_j(t)$ $-{y}_i(t), z\rangle=0$ for any $z\in\mathbb{R}^r$, and the inequality holds by using Young's inequality. Using $(a-b)^2\leq2(a^2+b^2)$, we have
\begin{equation}\label{eq13}\begin{split}
&\sum_{i=1}^n\sum_{j=1}^na_{ij}( \|y_j(t)-{y}_i(t)\|^2)\leq4\sum_{i=1}^n(\| {y}_i(t)-\bar{y}(t)\|^2)\\
\end{split}\end{equation}
It is noticed that
\begin{equation}\label{eqh}\begin{split}
&\langle{y}_i(t)-y, g_i(x_i(t))-\gamma(t){y}_i(t)\rangle\leq \langle g_i(x_i(t)), y_i(t)-y\rangle+\frac{{\gamma(t)(\|y\|}^2-{\|y_i(t)\|}^2}{2}
\end{split}\end{equation}
Using Lemma \ref{le34}, we know that $\|g_i(x_i(t))-\gamma(t){y}_i(t)\|\leq \kappa_2+\sqrt{n}\kappa_2$ and $\|g_i(x_i(t))-\gamma(t){y}_i(t)\|^2\leq 2\kappa_2^2+2{n}\kappa_2^2$. Using inequalities (\ref{eq10})-(\ref{eqh}) and taking the summation with respect to $t = 1, \cdots, T$, we have
\begin{equation}\label{eqcC}\begin{split}
&\sum_{t=1}^T\sum_{i=1}^n\gamma(t)\left((g_i(x_i(t)))^T{y}_i(t)-(g_i(x_i(t)))^T{y}+\frac{{\gamma(t)\|y\|}^2}{2}\right)\\
&\geq\sum_{t=1}^T\frac{1}{2}\left(\sum_{i=1}^n\|y_i(t+1)-y\|^2-\sum_{i=1}^n\|{y}_i(t)-y\|^2\right)\\
&~~~-\sum_{t=1}^T\frac{9}{2}\sum_{i=1}^n\| {y}_i(t)-\bar{y}(t)\|^2- n\kappa_2^2(1+n)\sum_{t=1}^T\gamma^2(t)\\
&~~~-\kappa_2(1+\sqrt{n})\sum_{t=1}^T\gamma(t)\sum_{i=1}^n\sum_{j=1}^na_{ij}\|{y}_j(t)-{y}_i(t)\|.
\end{split}\end{equation}
Note that $y_i(1)=0$, then
\begin{equation}\label{eqd}\begin{split}
&\sum_{t=1}^T\left(\|y_i(t+1)-y\|^2-\|{y}_i(t)-y\|^2\right)\geq-{\|y\|^2}.
\end{split}\end{equation}
By (\ref{eqcC}) and (\ref{eqd}), together with the facts that $\sum_{i=1}^n\sum_{j=1}^na_{ij}\|{y}_j(t)-{y}_i(t)\|\leq2\sum_{i=1}^n\|{y}_i(t)-{\bar{y}}(t)\|$ and ${\gamma(t)}\leq 1$, it implies inequality $(\ref{eq14})$. \QEDA


%



\ifCLASSOPTIONcaptionsoff
  \newpage
\fi



%

\end{document}